\documentclass[12pt]{amsart}

\usepackage{etex}
\usepackage[usenames,dvipsnames]{pstricks} 
\usepackage{epsfig}
\usepackage{graphicx,color}
\usepackage{geometry}
\usepackage[all]{xy}
\usepackage{amssymb,amscd}
\usepackage{cite}
\usepackage{fullpage}
\usepackage{marvosym}
\xyoption{poly}
\usepackage{url}
\usepackage{comment}
\usepackage{float}
\usepackage{bm}

\usepackage{tikz}
\usepackage{tikz-cd}
\usetikzlibrary{decorations.pathmorphing}
\newtheorem{introtheorem}{Theorem}

\newtheorem{theorem}{Theorem}[section]
\newtheorem{lemma}[theorem]{Lemma}
\newtheorem{proposition}[theorem]{Proposition}

\theoremstyle{definition}
\newtheorem{definition}[theorem]{Definition}

\newtheorem{remark}[theorem]{Remark}

\newtheorem*{question*}{Question}

\newtheorem*{questions*}{Questions}

\newtheorem*{steps*}{Answer/steps}

\newtheorem*{progress*}{Progress}

\newtheorem*{classification*}{Classification}

\newtheorem*{construction*}{Classification}
\newtheorem*{example*}{Example}

\newtheorem*{remark*}{Remark}
\newtheorem*{remarks*}{Remarks}
\newtheorem*{definition*}{Definition}

\usepackage{calrsfs}
\usepackage{url}
\usepackage{longtable}
\usepackage[OT2, T1]{fontenc}
\usepackage{textcomp}
\usepackage{times}
\usepackage[scaled=0.92]{helvet}

\newcommand{\Z}{\mathbb{Z}}

\newcommand{\F}{\mathbb{F}}

\newcommand{\X}{\mathcal{X}}

\DeclareMathOperator{\GL}{GL}

\DeclareMathOperator{\Aut}{Aut}

\DeclareSymbolFont{cyrletters}{OT2}{wncyr}{m}{n}
\DeclareMathSymbol{\Sha}{\mathalpha}{cyrletters}{"58}

\makeatletter

\def\greekbolds#1{%
 \@for\next:=#1\do{%
    \def\X##1;{%
     \expandafter\def\csname V##1\endcsname{\boldsymbol{\csname##1\endcsname}}
     }
   \expandafter\X\next;
  }
}

\greekbolds{alpha,beta,iota,gamma,lambda,nu,eta,Gamma,varsigma,Lambda}

\def\make@bb#1{\expandafter\def
  \csname bb#1\endcsname{{\mathbb{#1}}}\ignorespaces}

\def\make@bbm#1{\expandafter\def
  \csname bb#1\endcsname{{\mathbbm{#1}}}\ignorespaces}

\def\make@bf#1{\expandafter\def\csname bf#1\endcsname{{\bf
      #1}}\ignorespaces} 

\def\make@gr#1{\expandafter\def
  \csname gr#1\endcsname{{\mathfrak{#1}}}\ignorespaces}

\def\make@scr#1{\expandafter\def
  \csname scr#1\endcsname{{\mathscr{#1}}}\ignorespaces}

\def\make@cal#1{\expandafter\def\csname cal#1\endcsname{{\mathcal
      #1}}\ignorespaces} 
\def\do@Letters#1{#1A #1B #1C #1D #1E #1F #1G #1H #1I #1J #1K #1L #1M
                 #1N #1O #1P #1Q #1R #1S #1T #1U #1V #1W #1X #1Y #1Z}
\def\do@letters#1{#1a #1b #1c #1d #1e #1f #1g #1h #1i #1j #1k #1l #1m
                 #1n #1o #1p #1q #1r #1s #1t #1u #1v #1w #1x #1y #1z}
\do@Letters\make@bb   \do@letters\make@bbm
\do@Letters\make@cal  
\do@Letters\make@scr 
\do@Letters\make@bf \do@letters\make@bf   
\do@Letters\make@gr   \do@letters\make@gr
\makeatother

\def\ol{\overline}
\def\wt{\widetilde}

\def\onto{\twoheadrightarrow}

\newcommand{\<}{\langle}   %\< is not defined yet.
\renewcommand{\>}{\rangle} %\> is already defined.

\newcommand{\isoto}{\stackrel{\sim}{\longrightarrow}}

\def\Spec{{\rm Spec}\,}

\def\Fpbar{\overline{\bbF}_p}

\def\Zp{{\bbZ}_p}

\newcommand{\A}{\mathbb A}    % for adele

\def\makeop#1{\expandafter\def\csname#1\endcsname
  {\mathop{\rm #1}\nolimits}\ignorespaces}
\makeop{Hom}   \makeop{End}   \makeop{Aut}   \makeop{Isom}  \makeop{Pic} 
\makeop{Gal}   \makeop{ord}   \makeop{Char}  \makeop{Div}   \makeop{Lie} 
\makeop{PGL}   \makeop{Corr}  \makeop{PSL}   \makeop{sgn}   \makeop{Spf}
\makeop{Spec}  \makeop{Tr}    \makeop{Nr}    \makeop{Fr}    \makeop{disc}
\makeop{Proj}  \makeop{supp}  \makeop{ker}   \makeop{im}    \makeop{dom}
\makeop{coker} \makeop{Stab}  \makeop{SO}    \makeop{SL}    \makeop{SL}
\makeop{Cl}    \makeop{cond}  \makeop{Br}    \makeop{inv}   \makeop{rank}
\makeop{id}    \makeop{Fil}   \makeop{Frac}  \makeop{GL}    \makeop{SU}
\makeop{Nrd}   \makeop{Sp}    \makeop{Tr}    \makeop{Trd}   \makeop{diag}
\makeop{Res}   \makeop{ind}   \makeop{depth} \makeop{Tr}    \makeop{st}
\makeop{Ad}    \makeop{Int}   \makeop{tr}    \makeop{Sym}   \makeop{can}
\makeop{length}\makeop{SO}    \makeop{torsion} \makeop{GSp} \makeop{Ker}
\makeop{Adm}   \makeop{Mat}

\newcommand{\dieu}{Dieudonn\'{e} }

\DeclareMathSymbol{\twoheadrightarrow} {\mathrel}{AMSa}{"10}

\DeclareMathOperator{\pr}{pr}

\def\sfF{\mathsf{F}}
\def\sfV{\mathsf{V}}
\begin{document}

\title{Oort's Conjecture on Supersingular abelian varieties in odd characteristic}

\author{Valentijn Karemaker}
\address{Korteweg-de Vries Institute for Mathematics, University of Amsterdam, The Netherlands}
\email{V.Z.Karemaker@uva.nl}

\author{Chia-Fu Yu}
\address{Institute of Mathematics, Academia  Sinica and National Center for Theoretic Sciences, Taipei, Taiwan}
\email{chiafu@math.sinica.edu.tw}

%\date{\today}
 \keywords{abelian varieties, automorphism groups, endomorphism algebras, moduli space}
 \subjclass{14K10 (14K15, 11G10, 14K02)}

 \begin{abstract} 
 Oort's conjecture asserts that for any $g\ge 2$ and any prime $p$, every geometric generic member in the supersingular locus $\calS_g$ has automorphism group $\{\pm1 \}$. This has been proved very recently by Viehmann in full generality with previously known counterexamples for $p=2$ and $g=2,3$. We construct, for any $g\ge 3$, a closed subvariety of dimension $g-1$ which contains an open dense subset $\mathcal{U}$ of $a$-invariant $g-2$ such that $\mathcal{U}$ meets every irreducible component of $\calS_g$ and every geometric point in $\mathcal{U}$ has automorphism group $\{\pm 1\}$ if $p>2$. This gives an independent proof of Oort's conjecture for $p>2$. When $g>3$, so $a = g-2>1$, this provides complementary information to the $a=1$ locus investigated in Viehmann's work on how automorphism groups interact with the geometry.
 \end{abstract}

\maketitle
\setcounter{tocdepth}{2}

\section{Introduction}
Let $g\ge 1$ be a positive integer and $p$ a prime number. 
Let $\calA_g$ be the moduli space over $\Fpbar$ of $g$-dimensional principally polarised abelian varieties, and let $\calS_g$ be the supersingular locus of $\calA_g$. 
Let $k$ be an algebraically closed field of characteristic $p$. For each element $(X,\lambda)$ in $\calA_g(k)$, it is a fundamental question to understand what the endomorphism ring $\End(X)$ of $X$ and the automorphism group $\Aut(X,\lambda)$ of $(X,\lambda)$ may be. Moreover, it is also interesting to understand how these arithmetic invariants vary in the moduli space $\calA_g$ or in a subvariety, for example, in $\calS_g$. 

Chai and Oort~\cite{COirr} showed that for any prime $\ell\neq p$, the $\ell$-adic monodromy attached to any non-supersingular central leaf $\calC \subseteq \calA_g$ is surjective and that $\calC$ is irreducible. It follows that the geometric generic member $(X_{\bar \eta},\lambda_{\bar \eta})$ of $\calC$ has endomorphism ring $\Z$ and hence automorphism group $\{\pm 1\}$. Using this, every geometric generic member $(X_{\bar \eta},\lambda_{\bar \eta})$ of  either a non-supersingular Newton stratum or a non-supersingular Ekedahl-Oort (EO) stratum (i.e., an Ekedahl-Oort stratum that is not entirely contained in $\calS_g$ ) also shares the same property. 

One may ask what one can say for the supersingular case; it has been well understood that supersingular strata behave quite differently from non-supersingular strata. Indeed, the $\ell$-adic monodromy for every irreducible component of $\mathcal{S}_g$ has only finite image (though by contrast, the $\ell$-adic monodromy for the whole supersingular locus has very large image),  and every geometric generic member in $\calS_g$ has an endomorphism ring of $\Z$-rank $4g^2$. Despite these different arithmetic features, Oort's conjecture \cite[Question 4]{edixhoven-moonen-oort} asserts that when $g\ge 2$, every geometric generic member in $\calS_g$ still has automorphism group $\{\pm1 \}$.

For $g=2$ and $p>2$, Oort's conjecture has been proved independently by Ibukiyama~\cite{ibukiyama}, and by the first author and Pries \cite{karemaker-pries}, with a counterexample in  $p=2$. 
In~\cite{karemaker-yobuko-yu} the present authors and Yobuko studied mass strata on the supersingular locus $\calS_3$ and obtained explicit mass formulae on each stratum. 
The authors also show that on the maximal mass stratum, each geometric point has automorphism group $\{\pm 1\}$ if $p>2$, and $\{\pm 1\}^3$ if $p=2$, confirming Oort's conjecture for $g=3$, with again a counterexample in $p=2$. 

In~\cite{karemaker-yu:SSEOOC}, the present authors explore arithmetic invariants on the union $\calS_g^{\rm eo}$ of all supersingular EO strata. They introduce a stratification on $\calS_g^{\rm eo}$ which refines the mass stratification (i.e., the mass function is constant on each new stratum) and obtain mass formulae on each stratum. The authors further show that in the maximal stratum (which is an open dense subset of the maximal supersingular EO stratum), every geometric point has automorphism group $\{\pm 1\}$ when $g$ is even and $p>3$; see~\cite[Theorem~6.17]{karemaker-yu:SSEOOC}. This confirms the validity of Oort's conjecture under these conditions, since every irreducible component of $\calS_g$ meets this maximal stratum; cf.~\cite[Proposition~6.22]{karemaker-yu:SSEOOC}. The authors also prove Oort's conjecture for $\calS_4$, exploiting a description of the locus with $a$-number one due to Harashita (cf.~\cite[Theorem~7.1]{karemaker-yu:SSEOOC}). In addition, they determine a bound for the torsion of the automorphism groups of quasi-polarised supersingular $p$-divisible groups of any dimension  (cf.~\cite[Lemma~6.16]{karemaker-yu:SSEOOC}), which provides a strategy for computing automorphism groups modulo ``principal congruence subgroups'' (denoted $V_{p,s}$ in \emph{loc.~cit.~}) of level $s=1, 2, 3$.

Oort's conjecture is proved in full generality by Viehmann~\cite{viehmann:oort}, except when $g=2$ or $3$ and $p=2$. 
Using a similar local approach, the author reformulates the problem into one for generic quasi-polarised $p$-divisible groups in the Rapoport-Zink space. To determine the torsion of the automorphism groups, the author first provides an explicit description of the locus with $a$-number one in the Rapoport-Zink space, and successively computes the automorphism groups modulo the above-mentioned principal congruence subgroups of level s=1, 2, 3, respectively, through carefully analysing the defining equations. 
The description for $g=4$ is similar to Harashita's and is adapted from~\cite[Section 7]{karemaker-yu:SSEOOC}. Viehmann also bounds the torsion of automorphism groups of generic non-polarised supersingular $p$-divisible groups and deduces an analogue of Oort's conjecture for unitary Shimura varieties with a splitting condition at $p$. \\

In the present paper we proceed with the ``closed subvariety'' approach as in~\cite{karemaker-yu:SSEOOC}. Namely, we construct a closed subvariety $\calT_g$ of $\mathcal{S}_g$ for $g\ge 3$ and an open dense subset $\calU$, which are $\ell$-adic Hecke invariant for any prime $\ell\neq p$, so that every polarised supersingular abelian variety landing in $\calU$ has automorphism group $\{\pm 1\}$ provided that $p>2$. As a consequence, we give a different proof of  Oort's conjecture for $p>2$ and any $g \geq 3$. The subvariety $\calT_g$ comes from modifying the moduli space $\calP_{g,\eta}$ of polarised flag type quotients (PTFQs) constructed by Li and Oort; it has dimension $g-1$ and generic $a$-number $g-2$. Furthermore, its irreducible components are parametrised by 
principal polarisations of a superspecial abelian variety. 

Let $g\ge 3$ and $\mu$ be a principal polarisation on the unique superspecial abelian variety $E^g$ over $k$ with a canonical model $E_0^g$ defined over $\F_{p^2}$. Let $\calP_\mu$ be the moduli space over $\F_{p^2}$ parametrising two-step chains of polarised supersingular abelian varieties 
\[ (Y_\bullet,\lambda_\bullet): (Y_2, \lambda_2)=(E^g,p\mu) \xrightarrow{\rho_{2}}  (Y_1,\lambda_1) \xrightarrow{\rho_{1}} (Y_0,\lambda_0), \]
where $\rho_2$ and $\rho_1$ are polarised isogenies with kernels $\ker\rho_i$ which are $\alpha$-groups  of rank $p^{g-1}$ and $p$, respectively, that satisfy similar conditions to the moduli space $\calP_{g,\eta}$ in~\cite{lioort}; see Definition~\ref{def:Pmu}. Similarly to~\cite[Section~9.4]{lioort}, the map sending an object in $\calP_\mu$ to the isogeny $\rho_2: (E^g,p\mu) \to   (Y_1,\lambda_1)$ endows $\calP_\mu$ with the structure of a $\bbP^1$-bundle over the Fermat hypersurface
\[ \pi:\calP_\mu\longrightarrow \calF:=V(X_1^{p
+1} +\dots +X_g^{p+1})\subseteq \bbP^{g-1}_{\F_{p^2}},
\]
that admits a section with image $T$. 
Let $\calF^0\subseteq \calF$ be the complement of the union of  all $\F_{p^2}$-rational conics in $\bbP^{g-1}$.
Put $\calP_\mu':=\calP_\mu \setminus T$ and $\calP'_{\calF^0}:=\calP'_\mu \times_\calF \calF^0\subseteq \calP'_\mu$, the open subset restricted to $\calF^0$. Finally, let $\calD\subseteq \calP'_{\mu, \calF^0}$ be the horizontal divisor defined in Definition~\ref{def:D}. Then our main result is as follows.

\begin{introtheorem}[Theorem~\ref{thm:OCPmu}]\label{thm:A}
    Let $g \ge 3$ and $p>2$. Then for any chain $(Y_\bullet, \lambda_\bullet)$ of polarised supersingular abelian varieties corresponding to a $k$-point $y$ in $\calP'_{\mu, \calF^0}\setminus \calD$, 
    one has 
    $\Aut(Y_0,\lambda_0)=\{\pm 1\}$.    
\end{introtheorem}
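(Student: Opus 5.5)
Any automorphism of $(Y_0,\lambda_0)$ will be lifted up the chain to an automorphism of the superspecial endpoint, and its reduction modulo $p$ will then be pinned down by the flag data. Because $Y_0$ is supersingular, $\End^0(Y_0)$ is $\Mat_g$ over the quaternion algebra $B_{p,\infty}$, so $\Aut(Y_0,\lambda_0)$ is a finite group. The composite $\rho=\rho_1\circ\rho_2\colon E^g\to Y_0$ is an isogeny with $\rho^*\lambda_0=p\mu$, and its kernel is killed by $\sfF^2$ (or by $p$). The first step is to show that $\ker\rho$ is intrinsically determined by $Y_0$ in the sense of Li--Oort. For a generic point of $\calP'_\mu$ we expect $a(Y_0)=g-2$. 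We also expect $E^g\to Y_0$ to be the minimal isogeny, i.e. the superspecial hull, so that $\ker\rho$ is recovered as the kernel of $\rho^t$ composed with the relevant Frobenius. Removing the section $T$ is exactly what should guarantee this minimality. Granting this, every $\sigma\in\Aut(Y_0,\lambda_0)$ lifts uniquely to $\tilde\sigma\in\Aut(E^g,\mu)$ preserving $\ker\rho_2$ and $\ker\rho$. In the same way $\sigma$ induces an automorphism $\sigma_1$ of $(Y_1,\lambda_1)$.

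Next we reduce modulo the principal congruence subgroup. Write $G=\Aut(E^g,\mu)$, which via the Dieudonn\'e module $M=M(E^g)$ acts $\F_{p^2}$-linearly on $M/\sfF M\cong\F_{p^2}^g\otimes k$. The subgroup $\ker\rho_2$ is the $\alpha$-group of rank $p^{g-1}$ corresponding to a hyperplane, or dually a point $x\in\calF(k)$. Hence $\tilde\sigma$ modulo $\sfF$ is a unitary matrix $\bar\sigma\in U_g(\F_{p^2})$ fixing the point $x$. Since $x\in\calF^0$, $x$ lies on no $\F_{p^2}$-rational conic, and in particular on no $\F_{p^2}$-rational line. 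We then show that a unitary element fixing such a point is scalar. The eigenspace of $\bar\sigma$ containing $x$ is defined over $\F_{p^2}$, and if it were a proper subspace it would force $x$ into a proper $\F_{p^2}$-rational linear subvariety; the "conic" condition should handle the remaining case, where $x$ lies in a sum of two eigenspaces. We conclude that $\bar\sigma=\zeta$ with $\zeta\in\F_{p^2}^\times$ and $\zeta^{p+1}=1$.

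The second layer uses the $\bbP^1$-fibre, i.e. the choice of $\ker\rho_1$ inside $Y_1[\sfF]$. On the fibre over $x$, the scalar $\zeta$ (more precisely the level-two part of $\tilde\sigma$, modulo $V_{p,2}$ in the notation of \cite{karemaker-yu:SSEOOC}) acts by a fractional linear transformation. This transformation fixes the section $T$, and its fixed points away from $T$ should form exactly the divisor $\calD$. So for $y\notin\calD$ we get $\zeta=\pm1$, and we may replace $\tilde\sigma$ by $\pm\tilde\sigma$ to assume $\tilde\sigma\equiv 1$ modulo the level needed. Finally we invoke the torsion bound \cite[Lemma~6.16]{karemaker-yu:SSEOOC}: for $p>2$, a finite-order element of $\Aut(E^g,\mu)$ that is trivial modulo the relevant congruence subgroup is trivial. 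Applied to $\tilde\sigma$, this gives $\tilde\sigma=\pm1$, hence $\sigma=\pm1$. The hypothesis $p>2$ enters here, since torsion-freeness of $V_{p,s}$ fails for $p=2$ at small levels.

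I expect the main obstacle to be the second layer. This requires computing explicitly how $\tilde\sigma$ modulo $p$ (not merely modulo $\sfF$) acts on the $\bbP^1$-fibre, which needs coordinates on $Y_1$'s Dieudonn\'e module over a point of $\calF^0$. It also requires checking that the non-scalar level-two part cannot fix $\ker\rho_1$ off $\calD$, matching whatever explicit equation Definition~\ref{def:D} gives. My first attempt would be to write $M_1=\rho_2(M)$ in a basis adapted to $x$ and parametrise the fibre by $t\in\bbA^1=\bbP^1\setminus T$. I would then compute the induced action $t\mapsto\zeta^{p-1}t+c(\tilde\sigma)$, and read off that $\calD$ is the locus where a nontrivial such map has a fixed point.
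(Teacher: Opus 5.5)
Your outline follows the paper's proof. You pass to the superspecial hull $\wt M_0=M_2$ (Lemma~\ref{lm:minisog}), use \cite[Lemma~6.16]{karemaker-yu:SSEOOC} to reduce to the image of $\Aut(M_0,\langle\ ,\ \rangle_0)$ modulo $p$, use $t\notin\calQ$ to make the level-one part a scalar $a$, and use $u\notin\calD_t$ to force $a=\pm1$. That last step works: for $a^p\neq a$, the unique fixed point $-d_B/(a^p-a)$ of $u\mapsto a^{p-1}u+d_B/a$ lies in the $\F_{p^2}$-space $\calD_t$.

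The gap comes right after. The reduction of an automorphism is $\gamma=\left(\begin{smallmatrix} a\bbI_g&0\\ B&a^p\bbI_g\end{smallmatrix}\right)$ with $a^{p+1}=1$ and $B=B^T\in\Mat_g(\F_{p^2})$. The injectivity from \cite[Lemma~6.16]{karemaker-yu:SSEOOC} is for reduction modulo $p=-\Pi^2$, so you must show $\gamma=\pm1$, i.e.\ also $B=0$. Reduction modulo $\Pi$ alone would not suffice: for $p=3$, $(-1+\Pi)/2\in O_p^\times$ is a primitive cube root of unity congruent to $1$ modulo $\Pi$. After multiplying by $\pm1$ you are left with $\left(\begin{smallmatrix}\bbI_g&0\\ B&\bbI_g\end{smallmatrix}\right)$. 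Your last paragraph expects $y\notin\calD$ to exclude such non-scalar level-two elements, but it cannot. Such an element acts on $\bbP^1_t$ by the translation $u\mapsto u\pm d_B$, so it fixes $\ker\rho_1$ if and only if $d_B=0$, for every $u$ at once.

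The missing argument is the paper's last step. With $\gamma_0=\left(\begin{smallmatrix}0&0\\ B&0\end{smallmatrix}\right)$, the condition $d_B=0$ says $\gamma_0(E_1)\in\ol M_1^t=\{\sum_i a_if_i:\sum_i a_it_i=0\}$. This is the equation $\sum_{i,j}b_{ij}t_it_j=0$; indeed $d_B=(\sum_i t_i^2)^{-1}\sum_{i,j}b_{ij}t_it_j$. Now you need $t\notin\calQ$ a second time: the $t_it_j$ with $i\le j$ are $\F_{p^2}$-linearly independent, so $b_{ii}=0$ and $b_{ij}+b_{ji}=2b_{ij}=0$. Then $p>2$ gives $B=0$, and this is a use of $p>2$ independent of the torsion lemma.

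Two smaller points.
\begin{itemize}
\item In your level-one step, the eigenvalue of $\bar\sigma$ on $t$ need not a priori lie in $\F_{p^2}$. So the eigenspace need not be $\F_{p^2}$-rational, and ``a sum of two eigenspaces'' does not cover all cases. The clean argument: $\{A:At\in kt\}$ embeds in $k$ via the eigenvalue, hence is a field $\F_{p^{2m}}$ with $m\mid g$. The $\F_{p^2}$-span of the $t_i$ is then an $\F_{p^{2m}}$-space. For $m>1$ the products $t_it_j$ span at most $g(g/m+1)/2<g(g+1)/2$ dimensions, contradicting $t\notin\calQ$. The paper gets this from \cite[Proposition~3.4]{karemaker-yu:SSEOOC}.
\item Minimality $\wt M_0=M_2$ does not follow from removing $T$ alone. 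Lemma~\ref{lm:minisog} also needs $t\notin\calH$, which holds here because $\calH\subseteq\calQ$.
\end{itemize}
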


Let $\pr_0: \calP_\mu \to \calS_g$ be the natural projection $(Y_\bullet, \lambda_\bullet)\mapsto (Y_0,\lambda_0)$ and define 
\[ \calT_g:=\bigcup_{\mu \in {\rm Pol}(E_0^g)} \pr_0(\calP_\mu), \]
where $\mu$ runs through all principal polarisations on $E_0^g$. Then $\calT_g$ is a closed subvariety of $\calS_g$. 

\begin{introtheorem}\label{thm:B}
    The closed subvariety $\calT_g\subseteq \calS_g$, for $g\ge 3$, has equi-dimension $g-1$ with generic points of $a$-number $g-2$ and it is $\ell$-adic Hecke invariant for any prime $\ell\neq p$. Moreover, Oort's conjecture for $\calT_g$ holds true provided that $p>2$. 
\end{introtheorem}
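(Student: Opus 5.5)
Here is how I would prove Theorem~\ref{thm:B}; it splits into a geometric part and an arithmetic part.

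\textbf{Dimension.} First, each $\calP_\mu$ is a $\bbP^1$-bundle over the Fermat hypersurface $\calF\subseteq\bbP^{g-1}$. Hence it is irreducible, or at least equidimensional, of dimension $(g-2)+1=g-1$. Next I will show that $\pr_0:\calP_\mu\to\calS_g$ is quasi-finite on a dense open subset, e.g.\ on $\calP'_{\mu,\calF^0}$. As in \cite[Section~9]{lioort}, I expect the chain $(Y_\bullet,\lambda_\bullet)$ to be recoverable from $(Y_0,\lambda_0)$ there, up to finitely many choices. The reason is that $Y_1$ and $Y_2$ are determined by the canonical filtration, or by minimal isogenies from superspecial varieties, once the $a$-number is $g-2$. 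Since $\calP_\mu$ is proper, $\pr_0(\calP_\mu)$ is closed of dimension $g-1$. There are finitely many $\mu$, so $\calT_g$ is a finite union of closed irreducible subsets of dimension $g-1$; hence it is closed and equidimensional.

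\textbf{$a$-number.} I would compute $a(Y_0)$ at a generic point via Dieudonn\'e modules. $M_2=M(E^g)$ is superspecial, and $M_1\supset M_2$ is determined by a point of $\calF$. For a point in $\calF^0$ (no $\F_{p^2}$-rational conic), one should get $a(Y_1)=g-1$. A further generic step of degree $p$ off the section $T$ then lowers this to $a(Y_0)=g-2$. I will check this by writing $\sfF,\sfV$ explicitly on a basis adapted to the flag.

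\textbf{Hecke invariance.} Let $\phi:(Y_0,\lambda_0)\to(Y_0',\lambda_0')$ be an $\ell$-power quasi-polarised isogeny. Since $\ell\ne p$, $\phi$ lifts uniquely along the $p$-power isogenies $\rho_1,\rho_2$. So $(Y_0',\lambda_0')$ sits in a chain starting at $(Y_2',\lambda_2')$ with $Y_2'$ superspecial of dimension $g$, hence $Y_2'\cong E^g$ with $\lambda_2'=p\mu'$ for some principal polarisation $\mu'$. The kernels are unchanged, so the defining conditions of Definition~\ref{def:Pmu} are preserved, and $(Y_0',\lambda_0')\in\pr_0(\calP_{\mu'})\subseteq\calT_g$.

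\textbf{Oort's conjecture.} Let $p>2$. For each irreducible component of $\calT_g$, which is the image of some $\calP_\mu$, the set $\pr_0(\calP'_{\mu,\calF^0}\setminus\calD)$ is open dense in the component. This is because $T$, the conics and the divisor $\calD$ are proper closed subsets, and $\pr_0$ is generically finite on the irreducible $\calP_\mu$. By Theorem~\ref{thm:A}, every point of this dense open set has $\Aut=\{\pm1\}$, so the geometric generic point does too.

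\textbf{Main obstacle.} The step I expect to be hardest is the quasi-finiteness of $\pr_0$, i.e.\ $\dim\pr_0(\calP_\mu)=g-1$. My first attempt will be to show that $Y_1$ is determined by $Y_0$ via $M_1=$ the smallest $\sfF,\sfV$-stable module containing $M_0$ with superspecial-type quotient. I will test this against the explicit $a$-number computation above.
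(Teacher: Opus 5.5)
Your proposal is correct and follows essentially the paper's route. $\calP_\mu$ is an irreducible $\bbP^1$-bundle of dimension $g-1$ over $\calF$, and the $a$-number $g-2$ comes from the Dieudonn\'e computation of Proposition~\ref{prop:aM0}(iii). Hecke invariance comes from transporting the $p$-power chain along prime-to-$p$ isogenies, which the paper dismisses as clear because ${\rm Pol}(E_0^g)$ is Hecke stable. Oort's conjecture on $\calT_g$ then follows from Theorem~\ref{thm:OCPmu} on the dense open $\calP'_{\mu,\calF^0}\setminus\calD$. The generic finiteness you flag as the main obstacle is exactly Lemma~\ref{lm:minisog}: for $t\notin\calH$ one has $\wt M_0=M_2$ and hence $M_1=M_0+\sfV M_2$, so what makes the minimal superspecial overmodule recover the chain is $t\notin\calH$ (which holds on $\calF^0$), not merely $a=g-2$.
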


Using the transitivity of the prime-to-$p$ Hecke correspondences on the irreducible components of $\calS_g$ \cite[Proposition~6.22]{karemaker-yu:SSEOOC}, we deduce Oort's conjecture for $p>2$ from Theorem~\ref{thm:B}:

\begin{introtheorem}
    \label{thm:OCpodd}
    Oort's conjecture holds true for any $g\ge 2$ and any prime $p>2$.
\end{introtheorem}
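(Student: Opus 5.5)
We deduce Theorem~\ref{thm:OCpodd} from Theorem~\ref{thm:B}, treating $g=2$ separately. For $g=2$ and $p>2$ the statement is already known by the results of Ibukiyama~\cite{ibukiyama} and of the first author and Pries~\cite{karemaker-pries}, so we assume $g\ge 3$ and $p>2$.

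Let $W\subseteq \calS_g$ be an irreducible component, with geometric generic point $\bar\eta_W$. The plan is to show that $W$ contains a point of $\calT_g$ at which the automorphism group is $\{\pm1\}$, and then to specialise. More precisely, by Theorem~\ref{thm:B} there is, for each irreducible component $Z$ of $\calT_g$, an open dense subset $\calU_Z\subseteq Z$ on which every geometric point has $\Aut(Y_0,\lambda_0)=\{\pm1\}$. The sets $\calU_Z$ are the images of the loci $\calP'_{\mu,\calF^0}\setminus\calD$ from Theorem~\ref{thm:A}. By \cite[Proposition~6.22]{karemaker-yu:SSEOOC}, prime-to-$p$ Hecke correspondences act transitively on the irreducible components of $\calS_g$. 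Also $\calT_g$ is $\ell$-adic Hecke invariant, and these correspondences preserve automorphism groups, since a prime-to-$p$ isogeny identifies $\Aut$ with a subgroup that is Hecke-conjugate. It follows that once some component $W_0$ contains a point $x\in\calU_Z$, every component $W$ contains a Hecke translate $x'$ of such a point, and this $x'$ also satisfies $\Aut(x')=\{\pm1\}$. To obtain the initial $W_0$, note that $\calT_g$ is nonempty and lies in $\calS_g$. I will take care to pick $x$ generic in $Z$, so that the Hecke translates stay in the good open set $\calU$, which is itself Hecke-stable by construction.

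The remaining step is to pass from a special point $x'\in W$ with $\Aut=\{\pm1\}$ to the generic point $\bar\eta_W$. For this we use upper semicontinuity of automorphism groups under specialisation. Let $(X,\lambda)$ be a principally polarised abelian scheme over a strictly henselian DVR, dominating a curve in $W$ through $x'$. The restriction map $\Aut(X_{\bar\eta},\lambda_{\bar\eta})\to \Aut(X_{x'},\lambda_{x'})$ is injective. This follows from the rigidity lemma together with the Néron/valuative extension of homomorphisms of abelian schemes, and the finiteness of $\Aut$ of polarised abelian varieties. After a finite base extension, which does not change the geometric generic automorphism group, this gives $\Aut(X_{\bar\eta_W},\lambda_{\bar\eta_W})\hookrightarrow\{\pm1\}$. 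Since $\pm1$ are always automorphisms, equality holds.

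The main obstacle is the Hecke transport step. One must ensure that each component $W$ actually meets the good locus $\calU$, and not merely $\calT_g$. The fallback is that $\calT_g\cap W$ is a union of components of $\calT_g$, each of which contains a dense $\calU_Z$ by Theorem~\ref{thm:B}, so it suffices that $W\cap\calT_g\neq\emptyset$. Since $\calT_g$ is Hecke-stable and the Hecke orbit of its points reaches every component, this nonemptiness holds.
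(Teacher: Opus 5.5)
Your main line of argument is the paper's. You dispose of $g=2$ by Ibukiyama and Karemaker--Pries. For $g\ge 3$ you combine the prime-to-$p$ Hecke invariance of $\calU$ (Theorem~\ref{thm:OCTg}) with the transitivity of $\ell$-adic Hecke correspondences on ${\rm Irr}(\calS_g)$ \cite[Proposition~6.22]{karemaker-yu:SSEOOC}, which puts a point of $\calU$ on every irreducible component. You then specialise from the geometric generic point.

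Two of your auxiliary claims are false, however. The argument does not need them, so they should be deleted.

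First, Hecke correspondences do not preserve automorphism groups. Hecke-related points in general have different, non-conjugate, arithmetic subgroups of the same group as their automorphism groups. For example, for $p=11$ the supersingular curves with $j=0$ and $j=1728$ are $\ell$-isogenous, yet their automorphism groups have orders $6$ and $4$. So $\Aut(x')=\{\pm1\}$ must come from $x'\in\calU$ together with Theorem~\ref{thm:OCTg}, not be transported from $x$.

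Second, your ``fallback'' does not work. Nothing forces $W\cap\calT_g$ to be a union of components of $\calT_g$. Moreover, $W\cap\calT_g\neq\emptyset$ holds for trivial reasons and carries no information. Every component of $\calS_g$ contains superspecial points, and each superspecial point lies in $\calT_g$, being of the form $\pr_0(T)$ for a suitable $\mu$. These points have $a$-number $g$, so they lie outside $\calU$. The fact that every $W$ meets $\calU$ therefore rests solely on the Hecke invariance of $\calU$, exactly as in the paper.

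Finally, in the specialisation step, choose the DVR so that its generic point lies over the generic point of $W$, not of a curve in $W$, and its closed point lies over $x'$.
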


\noindent The outline of the paper is as follows. In Section 2, we construct the moduli space $\calP_\mu$ which parametrises certain chains of polarised supersingular abelian varieties of length 2, and explore its geometric properties and the relation with $a$-invariants. In Section 3, we prove Theorems~A and~B. \\

\noindent {\bfseries Acknowledgements.}
    The authors are grateful to Akio Tamagawa for helpful discussions, in particular on the proof of Theorem~\ref{thm:OCPmu}. They also thank Eva Viehmann for explaining her results in~\cite{viehmann:oort}.
    The first author was partially supported by the Dutch Research Council (NWO) through grant VI.Vidi.223.028. 
    The second author was partially supported by the National Science and Technology Council (NSTC) grant  114-2115-M-001-001 and the Academia Sinica IVA grant AS-IA-112-M01.

\section{The moduli space $\calP_{\mu}$}

\subsection{Set-up and arithmetic of $\mathcal{P}_{\mu}$}\

Let $k$ denote an algebraically closed field of characteristic $p$. Let $W=W(k)$ the ring of Witt vectors over $k$ and let $\sigma$ denote the Frobenius map on $W$ and its fraction field $W[\frac{1}{p}]$ induced by $x\mapsto x^p$ on $k$.
We fix a supersingular elliptic curve $E_0$ over $\F_{p^2}$ whose Frobenius endomorphism is $-p$. Let ${\rm Pol}(E_0^g)$ denote the set of isomorphism classes of principal polarisations of $E_0^g$.

\begin{definition}\label{def:Pmu}
For each $g\ge 3$ and $\mu\in {\rm Pol}(E_0^g)$, let 
\[ \calP_{\mu}:{(\F_{p^2}{\rm-Sch})} \to {\rm (Set)} \]
be the functor from the category of $\F_{p^2}$-schemes to the category of sets, which sends an $\F_{p^2}$-scheme $S$ to the set of isomorphism classes of chains of polarised abelian schemes of relative dimension $g$ over $S$  
\[ (Y_\bullet,\lambda_\bullet): (Y_2, \lambda_2) \xrightarrow{\rho_{2}}  (Y_1,\lambda_1) \xrightarrow{\rho_{1}} (Y_0,\lambda_0)   \]
such that 
\begin{itemize}
    \item [(i)] $(Y_2,\lambda_2)=(E_0^g,p \mu) \times_{\Spec \F_{p^2}} S$ and $\rho_i^* \lambda_{i-1}=\lambda_i$ for all $i$;
    \item [(ii)] $\ker \rho_2$ and $\ker \rho_1$ 
    are $\alpha$-groups of respective ranks $g-1$ and $1$ over $S$;
    \item [(iii)] $\ker \lambda_1$ an $\alpha$-group of rank $2$ over $S$; that is, it is annihilated both by the Frobenius morphism $\ker \lambda_1 \to \ker \lambda_1^{(p)}$ and the Verschiebung morphism $\ker \lambda_1^{(p)} \to \ker \lambda_1$. 
\end{itemize}
Two objects $(Y_\bullet,\lambda_\bullet)$ and $(Y_\bullet',\lambda_\bullet')$ are said to be {\it isomorphic} if there exist isomorphisms $\alpha_i: (Y_i,\lambda_i)\isoto (Y_i',\lambda_i')$ for $i=0,1,2$ such that 
$\rho_{i}' \circ \alpha_i =\alpha_{i-1} \circ \rho_i$ for $i=0,1,2$ and $\alpha_2={\rm id}$.
\end{definition}

As proved in~\cite{lioort}, $\calP_\mu$ is representable by a projective scheme over $\F_{p^2}$, which we again denote by~$\calP_\mu$. Since $\deg (\rho_1 \circ \rho_2)$ has degree $p^g$ and $\lambda_2$ has degree $p^{2g}$, it follows that $(Y_0,\lambda_0)$ is a \emph{principally}
polarised supersingular abelian variety.\\ 

For any chain $(Y_{\bullet},\lambda_{\bullet}) \in \mathcal{P}_{\mu}(k)$, we consider the contravariant quasi-polarised Dieudonn{\'e} modules of the $(Y_i, \lambda_i)$. That is, let $(M_i, \langle\ , \rangle_{i})$ denote the Dieudonn{\'e} module of $(Y_i, \lambda_i)$ for $i= 0,1,2$. We write $N:=M_2\otimes_W W[1/p]$ for the common isocrystal and $\<\, ,\>$ for the common pairing on $N$. Then the Dieudonn{\'e} modules $M_i^t$ of the respective dual abelian varieties $Y_i^t$ can be identified with the dual lattices of $M_i$ in $N$ with respect to $\<\ ,\ \>$.  The quasi-polarisation $\<\, ,\>_\mu$ induced by the principal polarisation $\mu$ is a perfect pairing on $M_2$ and satisfies $\<\, ,\>_\mu=p\<\, ,\>$.   
In particular, we have $M_2^t = p M_2$ and 
\begin{equation}\label{eq:Mi}
p M_2 \subseteq_{g-1} M_1^t \subseteq_{1} M_0^t=M_0 \subseteq_{1} M_1 \subseteq_{g-1} M_2
\end{equation}
by definition, where the indices denote the respective lengths.
Moreover, denoting for any Dieudonn{\'e} module $M$ its quotient modulo $pM_2$ by $\ol M:=(M+pM_2)/pM_2$, we have that $\ol {M_i^t}$ is the orthogonal complement of $\ol M_i$ in the symplectic space $(\ol M_2, \<\, , \>_\mu)$.\\

We choose a basis $e_1, \ldots, e_g, f_1, \ldots, f_g$ for $M_2$ such that 
\begin{equation}\label{eq:FVeifi}
\mathsf{F} e_i = f_i, \qquad \mathsf{F}f_i = -pe_i, \qquad \mathsf{V} e_i = -f_i, \qquad \mathsf{V}f_i = pe_i
\end{equation}
for all $i = 1, \ldots, g$, and such that
\begin{equation}\label{eq:pairingeifi}
\langle e_i, f_j \rangle_\mu = \delta_{ij}=-\langle f_j, e_i \rangle_\mu, \quad \<e_i,e_j\>_\mu=\langle f_i,f_j\>_\mu=0
\end{equation}
for all $i,j = 1, \ldots, g$. We may assume that the basis $\{e_i,f_i\}$ also generates the \dieu module $M(E_0^g)$ of $E_0^g$ over ${\Z_{p^2}}=W(\F_{p^2})$.

\begin{definition}
    The $a$-number of a Dieudonn{\'e} module $M$ is $a(M) := \dim M/(\mathsf{F}, \mathsf{V})M$. The $a$-number of an abelian variety $X/k$ is $a(X) = \dim_k \mathrm{Hom}(\alpha_p, X)$. If $M$ is the Dieudonn{\'e} module of an abelian variety $X$, then $a(M) = a(X)$.
\end{definition}

We see from \eqref{eq:FVeifi} that 
\[
M_2/\mathsf{V}M_2 = \langle e_1, \ldots, e_g \rangle_k, \quad \sfV \ol M_2=\<f_1, \dots, f_g\>_k.
\]
Here to ease notation, we also write $e_i$ and $f_i$ for their images in the corresponding quotient if there is no risk of confusion. 
Since $\ker \rho_2$ is an $\alpha$-group by assumption, the quotient $M_2/M_1$ is annihilated by $\mathsf{F}$ and $\mathsf{V}$; in other words, we have
\begin{equation}\label{eq:VM2inM1}
\mathsf{F}M_2 = \mathsf{V}M_2 \subseteq M_1, 
\end{equation}
and since the $a$-number of $M_2$ equals $\dim M_2/(\mathsf{F}, \mathsf{V})M_2 = g$, we obtain from~\eqref{eq:Mi} that $\dim M_1/\mathsf{V}M_2=1$. We write 
\begin{equation}\label{eq:M1}
\begin{split}
M_1 &= \langle v \rangle_W + \mathsf{V}M_2,\\
v &:= [t_1]e_1 + \ldots + [t_g]e_g,
\end{split}
\end{equation}
where $t_i \in k$ and $[t_i] \in W$ denote their respective Teichm{\"u}ller lifts. For a commutative finite group scheme $G$ over $k$, let $M(G)$ denote its Dieudonn{\'e} module.

\begin{lemma}\label{lem:kerlambda1} The following three statements are equivalent:
\begin{enumerate}
    \item $\ker \lambda_1 \subseteq Y_1[\mathsf{F}]$;
    \item $\ker \lambda_1 \subseteq Y_1[\mathsf{V}]:=\ker (\sfV: Y_1\to Y_1^{(p^{-1})})$;
    \item the element $t=(t_1,\dots, t_g)$ satisfies $t_1^{p+1}+\dots + t_{g}^{p+1}=0$.
\end{enumerate}
\end{lemma}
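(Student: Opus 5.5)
We work entirely with contravariant Dieudonné modules and translate each condition into a statement about the lattices $M_1$ and $M_1^t$ inside $N$. Since $\ker\lambda_1$ is the kernel of $\lambda_1\colon Y_1\to Y_1^t$, its Dieudonné module is the quotient $M(\ker\lambda_1)=M_1/M_1^t$, a $k$-vector space of dimension $2$ by \eqref{eq:Mi}. Condition (1) says that $\sfF$ kills this quotient, i.e.\ $\sfF M_1\subseteq M_1^t$. Condition (2) says that $\sfV$ kills it, i.e.\ $\sfV M_1\subseteq M_1^t$. The first step is to fix which of $\sfF$, $\sfV$ corresponds to which kernel under the contravariant convention; since both conditions will be treated symmetrically, this only matters for bookkeeping.

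The second step is to compute $M_1^t$ explicitly. Set $\ol M_1=\langle \bar v, f_1,\dots,f_g\rangle_k\subseteq \ol M_2$. By the remark after \eqref{eq:Mi}, $\ol{M_1^t}$ is the orthogonal complement of $\ol M_1$ with respect to $\<\,,\>_\mu$. Using \eqref{eq:pairingeifi}, this complement is
\[
\ol{M_1^t}=\{\,x=\textstyle\sum a_ie_i+\sum b_if_i : a_i=0 \text{ for all } i,\ \sum b_i t_i=0\,\}.
\]
The conditions $a_i=0$ come from pairing against the $f_j$, and the condition $\sum b_it_i=0$ comes from pairing against $\bar v$. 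Hence
\[
M_1^t=pM_2+\Big\{\textstyle\sum [b_i]f_i : \sum b_it_i=0\Big\}.
\]

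The third step is to compute the images. From \eqref{eq:FVeifi} we have $\sfF([t_i]e_i)=[t_i^p]f_i$ and $\sfV([t_i]e_i)=-[t_i^{1/p}]f_i$. Moreover $\sfF(\sfV M_2)=\sfV(\sfV M_2)=pM_2\subseteq M_1^t$ automatically. So (1) is equivalent to $\sfF v\in M_1^t$. Reducing modulo $pM_2$, this becomes $\sum t_i^p f_i\in\ol{M_1^t}$, i.e.\ $\sum t_i^{p}t_i=\sum t_i^{p+1}=0$, which is (3). Similarly, (2) is equivalent to $\sum t_i^{1/p}t_i=0$. Raising this to the $p$-th power gives $\sum t_i^{1+p}=0$ again, and the two conditions are equivalent because Frobenius is bijective on the perfect field $k$. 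This proves (1)$\iff$(3)$\iff$(2).

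The only delicate points are the following. First, one must verify that $M_1^t$ as computed is really the dual lattice, i.e.\ that the orthogonal-complement description transfers from $\ol M_2$ to lattices; this uses $pM_2\subseteq M_1^t$ from \eqref{eq:Mi}. Second, one must keep track of $\sigma$-semilinearity: $\sfF$ is $\sigma$-linear and $\sfV$ is $\sigma^{-1}$-linear, and this is exactly why the exponents $p$ and $1/p$ appear. Everything else is a direct computation.
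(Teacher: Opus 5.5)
Your proposal is correct and follows essentially the paper's route. You reduce (1) and (2) to $\sfF M_1\subseteq M_1^t$ and $\sfV M_1\subseteq M_1^t$, use $\sfF M_1=\langle \sfF v\rangle_W+pM_2$ (and likewise for $\sfV$) together with $pM_2\subseteq M_1^t$, and then test the one remaining generator against $\bar v$ under $\langle\,,\rangle_\mu$; this yields $\sum t_i^{p+1}$ and $\sum t_i^{p^{-1}+1}$. The differences are cosmetic: you write out $\ol{M_1^t}$ explicitly, as the paper only does later in \eqref{eq:M1tbar}, instead of pairing $\sfF\bar v$ directly with $\bar v$, and you spell out the $p$-th power step (using that $k$ is perfect) that links the two Fermat-type equations.
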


\newpage
\begin{proof}\
\begin{enumerate}
    \item[(1) $\Leftrightarrow$ (2)]
    The inclusion $\ker \lambda_1 \subseteq Y_1[\mathsf{F}]$ of group schemes is equivalent to the surjection $M(Y_1[\mathsf{F}]) \twoheadrightarrow M(\ker \lambda_1)$ of Dieudonn{\'e} modules. Since $M(Y_1[\mathsf{F}]) = M_1/\mathsf{F}M_1$ and $M(\ker \lambda_1) = M_1/M_1^t$, we see that this is further equivalent to $\mathsf{F}M_1 \subseteq M_1^t$. Since $\mathsf{F} M_1 = \langle \mathsf{F} v \rangle_W + pM_2$ by~\eqref{eq:M1} and $pM_2 \subseteq M_1^t$ by~\eqref{eq:Mi}, we see that the inclusion holds if and only if 
    \begin{equation}\label{eq:Fermateq}
    \langle \mathsf{F} \bar{v}, \bar{v} \rangle_{\mu,k} = t_1^{p+1} + \dots + t_g^{p+1} = 0 
    \end{equation}
    for $\bar{v} = v \bmod pM_2$. Noting that also
    \[
    \langle \mathsf{V} \bar{v}, \bar{v} \rangle_{\mu,k} = t_1^{p^{-1}+1} + \dots + t_g^{p^{-1}+1}
    \]
    by replacing $\mathsf{F}$ by $\mathsf{V}$ in the above, it follows that statements (1) and (2) are equivalent.
    \item[(1) $\Leftrightarrow$ (3)] This follows from Equation~\eqref{eq:Fermateq}.    
\end{enumerate}
\end{proof}

\noindent In what follows, it will sometimes be convenient to use a different basis from $e_1, \ldots, e_g, f_1, \ldots, f_g$ for the module $\overline{M}_2=M_2/pM_2$. We define
\begin{equation}\label{eq:EiFi}
    \begin{split}
        E_1 = \sum_{i=1}^g t_i e_i, \quad E_2 = \sum_{i=1}^g t_i^p e_i, \quad \ldots, \quad E_{g-1} = \sum_{i=1}^g t_i^{p^{g-2}}e_i, \quad E_g = \sum_{i=1}^g t_i^{p^{-1}} e_i, \\
        F_1 = \sum_{i=1}^g t_i f_i, \quad F_2 = \sum_{i=1}^g t_i^p f_i, \quad \ldots, \quad F_{g-1} = \sum_{i=1}^g t_i^{p^{g-2}}f_i, \quad F_g = \sum_{i=1}^g t_i^{p^{-1}} f_i.
    \end{split}
\end{equation}

Thus, the set $\{E_1, \ldots, E_g, F_1, \ldots, F_g\}$ forms a $k$-basis of $\overline{M}_2$ if and only if the coefficient matrix
\begin{equation}\label{eq:T}
\mathbb{T} := \begin{bmatrix} 
t_1 & t_1^p & \ldots & t_1^{p^{g-2}} & t_1^{p^{-1}} \\
\vdots & \vdots & \ldots & \vdots & \vdots \\
t_g & t_g^p & \ldots & t_g^{p^{g-2}} & t_g^{p^{-1}} \\
\end{bmatrix}
\end{equation}
has nonzero determinant, which holds if and only if the projective point $t := (t_1:\ldots:t_g)$ is not contained in any $\mathbb{F}_p$-rational hyperplane. \\ 

Assume that $t$ satisfies $\sum_{i=1}^g t_i^{p+1}=0$ and that it is away from any $\mathbb{F}_p$-rational hyperplane.
From \eqref{eq:M1} it follows that, in this new basis,
\[
\overline{M}_1 = \langle E_1, F_1, \ldots, F_g \rangle_k,
\]
and using the pairings~\eqref{eq:FVeifi} it follows that we may write
\[
\overline{M}_1^t = \langle F_2, F_3 - t'_3 F_1,\dots ,F_{g-1} - t'_{g-1} F_1, F_g \rangle_k,
\]
for some $t'_3, \ldots, t'_{g-1} \in k$. In fact, 
\begin{equation}
    \label{eq:tiprime}
t_j'=\frac{\<E_1,F_j\>_\mu}{\<E_1,F_1\>_\mu}=\frac{\sum_{i=1}^g t_i^{p^{j-1}+1}}{\sum_{i=1}^g t_i^2}, \quad 3\le j\le g-1.
\end{equation}
Here we assume $\sum_{i=1}^g t_i^2\neq 0$ if $g\ge 4$; when $g=3$, we have $\overline{M}_1^t=\<F_2,F_3\>_k$.
Hence, by slight abuse of notation we obtain for any $g$ that 
\begin{equation}\label{eq:M1M1t}
\overline{M}_1/\overline{M}_1^t = \langle E_1, F_1 \rangle_k,
\end{equation}
and thus by~\eqref{eq:Mi} that
\begin{equation}\label{eq:M0bar}
   \overline{M}_0 = \langle E_1 + u F_1 \rangle_k + \overline{M}_1^t 
\end{equation}
for some $u \in k$, provided that $M_0\neq \sfV M_2$. 
\subsection{Geometry of $\mathcal{P}_{\mu}$}\

Analogous to \cite[\S 9.3]{lioort}, we consider truncations of the chains in $\mathcal{P}_{\mu}(k)$ via the forgetful map
\begin{equation}\label{eq:forget}
\left((Y_\bullet,\lambda_\bullet) =  (Y_2, \lambda_2) \xrightarrow{\rho_{2}}  (Y_1,\lambda_1) \xrightarrow{\rho_{1}} (Y_0,\lambda_0) \right) \mapsto \left( (Y_2, \lambda_2) \xrightarrow{\rho_{2}}  (Y_1,\lambda_1) \right).
\end{equation}
It follows from the observations in the previous subsection that this induces a map  
\[
\left( (M_0 \subseteq M_1 \subseteq M_2) \text{ such that } \mathsf{V}M_2 \subseteq M_1 \right) \mapsto (\mathsf{V}M_2 \subseteq M_1 \subseteq M_2)
\]
on chains of Dieudonn{\'e} modules, whose quasi-polarisations we have temporarily suppressed for ease of notation. Note that the condition $\mathsf{V}M_2 \subseteq M_1$ is \eqref{eq:VM2inM1}. The right hand side admits a projective (fine) moduli scheme, which is the Fermat hypersurface 
\[ \mathcal{F} := V(X_1^{p+1} + \ldots + X_g^{p+1}) \subseteq \mathbb{P}^{g-1}_{\F_{p^2}} \] 
by Lemma~\ref{lem:kerlambda1}, cf.~\cite[Lemma~3.7]{lioort}. Finally, the fibres of~\eqref{eq:forget} are $\alpha$-groups of rank 1 in the $\alpha$-group $\ker \lambda_1$ of rank $2$, by Definition~\ref{def:Pmu}.(ii) and (iii). Thus, we obtain a morphism 
\[ \pi: \mathcal{P}_{\mu} \to \mathcal{F} \] 
through which $\mathcal{P}_{\mu}$ is a $\mathbb{P}^1$-bundle over $\mathcal{F}$. As above, let $t = (t_1 : \ldots : t_g)$ denote a point on~$\mathcal{F}$ and denote the fibre over it by $\pi^{-1}(t) = \mathbb{P}^1_t$. Then a point $y$ on $\mathcal{P}_{\mu}(k)$ is determined by the pair of parameters $(t,u)$ with $t \in \mathcal{F}(k)$ and $u \in \mathbb{P}^1_t(k)$. By the description of $\calP_\mu$, we see that the geometry of $\calP_\mu$ does not depend on the choice of $\mu$. However, the arithmetic properties (for example, the automorphism group) of points in $\calP_\mu$ depend on both its coordinates $(t,u)$ and on $\mu$.

\begin{definition}
    The $a$-number of a point $y=(t,u)\in \calP_\mu$ is defined to be $a(y):=a(M_0)$, where 
    $(M_\bullet)$ is the chain of \dieu modules corresponding to $y$.
\end{definition}

Since $\ker \rho_2 \subseteq Y_2[\sfF]$ by definition and $Y_2/Y_2[\sfF]\simeq Y_2^{(p)}$,  there is a section $s: \mathcal{F} \to \mathcal{P}_{\mu}$ of~$\pi$, which sends 
\[ \left(  (Y_2, \lambda_2) \xrightarrow{\rho_{2}}  (Y_1,\lambda_1)\ \right)  \mapsto \  
\sfF=F_{Y_2/S}: (Y_2, \lambda_2) \xrightarrow{\rho_{2}}  (Y_1,\lambda_1) \xrightarrow{\rho_{1}} (Y_2,\lambda_2)^{(p)}. 
\]
In terms of the corresponding chains of \dieu modules in $\calP_\mu(k)$, the section $s: \calF(k) \to \calP_\mu(k)$ is given by  
\[ \left ( (M_1, \langle\ ,\ \rangle)\subseteq (M_2, \langle\ ,\ \rangle) \right )\ \mapsto \left ((\sfV M_2, \langle\ ,\ \rangle\subseteq (M_1, \langle\ ,\ \rangle)\subseteq (M_2,\langle\ ,\ \rangle) \right ), \quad M_0=\sfV M_2. \]
Set $T := s(\mathcal{F})$, viewed as the $\infty$-section of $\pi$, and $\calP_\mu':=\calP_\mu \setminus T$. Then $\pi: \calP_\mu'\to \calF$ equips $\calP_\mu'$ with the structure of an $\bbA^1$-bundle over $\calF$. Moreover, $a(y)=g$ for $y\in T$, since $\dim \mathsf{V}M_2 /(\mathsf{F},\mathsf{V})\mathsf{V}M_2 = \dim \mathsf{V} M_2 / pM_2 = g$.

\begin{remark}\label{rem:trivialization}
 The two-dimensional $k$-vector space $M_1/M_1^t$ for $t\in \calF(k)$ forms a rank two vector bundle $\calV$ over $\calF$ which is defined over $\F_{p^2}$. Over the open subset $U'\subseteq \calF$ away from all $\F_p$-rational hyperplanes and the conic $V(\sum_{i=1}^g X_i^{2})$, the elements $E_1$ and $F_1$ (defined in~\eqref{eq:EiFi}) define nowhere vanishing global sections of $\calV$ over $U'$, which generate the fibre $\calV_t$ at every $t\in U'$. Thus, $(E_1,F_1)$ gives an isomorphism of algebraic varieties over $\F_{p^2}$:
 \[ 
 (E_1, F_1): U'\times \A^2\isoto \calV\times_\calF U': (t, a_1,a_2) \mapsto (t, a_1E_1+a_2 E_2). 
 \]  
 This isomorphism induces a trivialisation $U'\times \bbA^1\isoto \calP'_\mu \times_\calF U'$ over $k$, which sends each $k$-point $(t,u)$ to the chain of \dieu modules $(M_i)_{i=0,1,2}$; here $M_i$ for $i=0,1$ are the respective lifts of $\ol M_i$ defined by Equation~\eqref{eq:M1} and Equation~\eqref{eq:M0bar} to $W$.
\end{remark} 

The following lemmas show how the $a$-numbers of the $M_i$ depend on the parameters $(t,u)$.

\begin{lemma}\label{lem:aM1}
    We have $a(M_1) \geq g-1$, and $a(M_1) = g$ if and only if $t \in \mathcal{F}(\mathbb{F}_{p^2})$.
\end{lemma}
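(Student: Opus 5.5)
We compute $a(M_1)=\dim M_1/(\sfF M_1+\sfV M_1)$ directly from the description $M_1=\langle v\rangle_W+\sfV M_2$ in~\eqref{eq:M1}. Since $\sfF M_2=\sfV M_2$, we have $\sfF\sfV M_2=\sfV\sfV M_2=pM_2$ (using $\sfF\sfV=p$ and $\sfV^2 e_i = \sfV(-f_i)=-pe_i$), so $\sfF M_1+\sfV M_1=\langle \sfF v,\sfV v\rangle_W+pM_2$. All three modules contain $pM_2$, so we may work in $\ol M_2=M_2/pM_2$. There $\ol M_1=\langle \bar v\rangle_k\oplus\langle f_1,\dots,f_g\rangle_k$ has dimension $g+1$, and $\sfF\bar v=\sum t_i^p f_i$, $\sfV\bar v=-\sum t_i^{p^{-1}} f_i$ both lie in $\langle f_1,\dots,f_g\rangle_k$. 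Hence
\[ a(M_1)=g+1-\dim_k\langle \textstyle\sum_i t_i^pf_i,\ \sum_i t_i^{p^{-1}}f_i\rangle_k . \]
Since $t\neq0$, the first vector is nonzero, so the span has dimension $1$ or $2$, giving $a(M_1)\in\{g-1,g\}$. In particular $a(M_1)\ge g-1$.

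Moreover, $a(M_1)=g$ if and only if the vectors $(t_i^p)_i$ and $(t_i^{p^{-1}})_i$ are proportional, i.e. $t^{(p)}$ and $t^{(p^{-1})}$ define the same projective point. Applying the $p$-power map, which is a bijection on $k$, this is equivalent to $t^{(p^2)}=t$ in $\bbP^{g-1}(k)$. By the standard argument (normalise some coordinate $t_j=1$, after which $t_i^{p^2}=t_i$ for all $i$), this holds if and only if $t\in\bbP^{g-1}(\F_{p^2})$. Combined with $t\in\calF$, we get $a(M_1)=g$ if and only if $t\in\calF(\F_{p^2})$.

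The only point needing care is the bookkeeping of $\sfF$ and $\sfV$ on $\sfV M_2$, namely checking that $\sfF\sfV M_2$ and $\sfV^2M_2$ both equal $pM_2$, so that no extra vectors enter beyond $\sfF v$ and $\sfV v$. This follows from~\eqref{eq:FVeifi}. The Teichmüller lifts cause no problem because everything is computed modulo $pM_2$.
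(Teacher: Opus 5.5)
Your proof is correct and follows the same route as the paper: both use that $pM_2$ has colength one in each of $\sfF M_1$ and $\sfV M_1$, so $(\sfF,\sfV)M_1/pM_2$ has dimension $1$ or $2$, with dimension $1$ exactly when $\sfF M_1=\sfV M_1$, i.e.\ when $t$ is $\F_{p^2}$-rational. You make this explicit in coordinates via $\sfF\bar v=\sum_i t_i^p f_i$ and $\sfV\bar v=-\sum_i t_i^{p^{-1}}f_i$, which also spells out the paper's brief final step identifying $\sfF M_1=\sfV M_1$ with $t\in\bbP^{g-1}(\F_{p^2})$.
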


\begin{proof}
    On the one hand, by~\eqref{eq:Mi} and~\eqref{eq:VM2inM1} we have $\mathsf{V}M_2 \subseteq_1 M_1$, which implies that $pM_2 \subseteq_1 \mathsf{F}M_1$ and $pM_2 \subseteq_1 \mathsf{V}M_1$. On the other hand, by~\eqref{eq:Mi}, we have $pM_2 \subseteq_{g+1} M_1$. Thus,
    \[
    pM_2 \subseteq_{\ell_1} (\mathsf{F}, \mathsf{V})M_1 \subseteq_{\ell_2} M_1
    \]
    where $\ell_1 + \ell_2 = g+1$ and $\ell_1 \in \{1,2\}$, so $\ell_2 \in \{g-1, g\}$, i.e. $a(M_1) \geq g-1$, as claimed. One also reads off that $a(M_1) = g$ if and only if $\mathsf{F}M_1 = \mathsf{V}M_1$,  if and only if the $k$-subspace $M_1/\sfV M_2$ is defined over $\F_{p^2}$, that is, $t \in \mathcal{F}(\mathbb{F}_{p^2})$.
\end{proof}

\begin{lemma}
    If $t \in \mathcal{F}(\mathbb{F}_{p^2})$ then $a(M_0) \geq g-1$, and $a(M_0) = g$ if and only if $u \in \mathbb{P}^1_t(\mathbb{F}_{p^2})$.
\end{lemma}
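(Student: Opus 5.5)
The plan is to exploit that for $t\in\calF(\F_{p^2})$ the module $M_1$ is superspecial, so that everything reduces to linear algebra over $\F_{p^2}$ on the two-dimensional space $M_1/M_1^t$. By Lemma~\ref{lem:aM1}, $a(M_1)=g$, which (see its proof) means $\sfF M_1=\sfV M_1$. Since $\ker\lambda_1$ is an $\alpha$-group (Definition~\ref{def:Pmu}(iii) and Lemma~\ref{lem:kerlambda1}), we have $\sfF M_1\subseteq M_1^t\subseteq M_0$. Moreover, from $M_0\subseteq_1 M_1$ we get $\sfF M_0\subseteq_1 \sfF M_1$ and $\sfV M_0\subseteq_1 \sfV M_1=\sfF M_1$. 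Hence $(\sfF,\sfV)M_0\subseteq \sfF M_1$ with colength $0$ or $1$.

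Next, compute $\dim M_0/\sfF M_1$. We have $\sfF M_1\subseteq_g M_1$ (indeed $a(M_1)=g$ and $(\sfF,\sfV)M_1=\sfF M_1$), and $M_0\subseteq_1 M_1$, so $\dim M_0/\sfF M_1=g-1$. It follows that
\[ a(M_0)=(g-1)+\length\bigl(\sfF M_1/(\sfF,\sfV)M_0\bigr)\in\{g-1,g\}, \]
which proves the inequality $a(M_0)\ge g-1$. Furthermore, $a(M_0)=g$ if and only if $\sfF M_0=\sfV M_0$.

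For the equality criterion, consider the operator $\Phi:=\sfV^{-1}\sfF$ on the isocrystal $N$. It is $\sigma^2$-linear and preserves $M_1$ and $\sfF M_1$, because $M_1$ is superspecial. Thus $\Phi$ induces a $\sigma^2$-linear bijection on $M_1/\sfF M_1$, and hence an $\F_{p^2}$-structure on that space. The condition $\sfF M_0=\sfV M_0$ is equivalent to $\Phi M_0=M_0$. Since $M_1^t$ is the dual of a $\Phi$-stable lattice, it is also $\Phi$-stable; here one checks the compatibility $\<\Phi x,\Phi y\>=\<x,y\>^{\sigma^2}$ from the relation $\<\sfF x,y\>=\<x,\sfV y\>^\sigma$. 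So $\Phi M_0=M_0$ if and only if the line $\overline M_0/\overline M_1^t$ in the two-dimensional space $M_1/M_1^t$ is $\F_{p^2}$-rational for the induced structure.

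The remaining task, and the step I expect to be the main obstacle, is to match this $\F_{p^2}$-structure with the coordinate $u$ on $\bbP^1_t$. The trivialisation in Remark~\ref{rem:trivialization} via $E_1,F_1$ only exists away from $\F_p$-rational hyperplanes, so it is unavailable when $t\in\calF(\F_{p^2})$. Instead, I would choose $t$ with $t_i\in\F_{p^2}$, use the lift $v=\sum[t_i]e_i$ with $\sfF v$ and $\sfV v$ computed from \eqref{eq:FVeifi}, and pick a complement of $\overline M_1^t$ in $\overline M_1$ spanned by $\F_{p^2}$-rational vectors such as $v$ and a suitable $\sum t_i' f_i$ with $t'\in\F_{p^2}^g$. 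One then checks directly that $\Phi$ fixes these basis vectors modulo $M_1^t$. With that basis, the coordinate $u$ of the line $\overline M_0/\overline M_1^t$ transforms under $\Phi$ as $u\mapsto u^{p^2}$, so $\Phi M_0=M_0$ exactly when $u\in\bbP^1_t(\F_{p^2})$. This also covers the point $u=\infty$, i.e.\ $M_0=\sfV M_2$, which is rational and consistent with $a=g$ on $T$.
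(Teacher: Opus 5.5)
Your proposal is correct and follows the paper's proof. Both use superspeciality of $M_1$ via Lemma~\ref{lem:aM1}, then the length count $\sfF M_0,\sfV M_0\subseteq_1 \sfF M_1=\sfV M_1$ to get $a(M_0)\in\{g-1,g\}$, and then identify $a(M_0)=g$ with $\F_{p^2}$-rationality of the line $M_0/M_1^t\subseteq M_1/M_1^t$; the paper only asserts this last step, while you justify it via $\Phi=\sfV^{-1}\sfF$ and an $\F_{p^2}$-rational basis.

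One small correction: by \eqref{eq:FVeifi} one has $\Phi e_i=-e_i$ and $\Phi f_i=-f_i$. So $\Phi$ sends your rational basis vectors to their negatives rather than fixing them, but this does not change which lines are stable or the conclusion $u\mapsto u^{p^2}$.
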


\begin{proof}
Since $t\in \calF(\F_{p^2})$, we have $a(M_1)=g$ by Lemma~\ref{lem:aM1}, i.e. the Dieudonn{\'e} module $M_1$ is superspecial. It follows that the subspace $M_0/M^t_1\subseteq M_1/M_1^t$ is defined over $\F_{p^2}$ if and only if $a(M_0)=g$, as well. 

We have $\sfF M_1= \sfV M_1$ by assumption, and $\dim \sfV M_1/\sfV M_0=\dim \sfF M_1/\sfF M_0=1$, so $\dim (\sfF,\sfV) M_1/(\sfF,\sfV)M_0=\dim \sfV M_1/(\sfF,\sfV)M_0 \in \{0,1\}$. 
Thus, $a(M_0) = \dim M_0/(\mathsf{F},\mathsf{V})M_0$ equals 
\[
\begin{split}
\dim M_0/(\mathsf{F},\mathsf{V})M_1 + \dim (\mathsf{F},\mathsf{V})M_1/(\mathsf{F},\mathsf{V})M_0 \\
= \dim M_1/(\mathsf{F},\mathsf{V})M_1 - 1 + \dim (\mathsf{F},\mathsf{V})M_1/(\mathsf{F},\mathsf{V})M_0 & \\
\geq g-1. 
\end{split}
\]
\end{proof}

\begin{lemma}
    When $t \not\in \mathcal{F}(\mathbb{F}_{p^2})$ and $y \not\in T$, we have $a(M_0) = g-2$.
\end{lemma}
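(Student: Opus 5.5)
We compute $a(M_0)=\dim M_0/(\sfF,\sfV)M_0$ through the chain $pM_2\subseteq (\sfF,\sfV)M_0\subseteq M_0$. Since $M_0\supseteq pM_2$ has length $g$ over $pM_2$ by \eqref{eq:Mi}, we get $a(M_0)=g-\dim_k (\sfF,\sfV)M_0/pM_2$. So the task is to show $\dim (\ol{\sfF M_0}+\ol{\sfV M_0})=2$, where bars denote images in $\ol M_2=M_2/pM_2$.

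\textbf{Step 1: reduce to the images of the generator of $\ol M_0$ modulo $\ol M_1^t$.} Because $M_1^t\subseteq M_0$ and $M_0/M_1^t$ is one-dimensional, we first treat $\sfF M_1^t$ and $\sfV M_1^t$. We have $M_1^t\subseteq M_1=\<v\>_W+\sfV M_2$ and $M_1^t\supseteq pM_2$. Since $\sfF\sfV M_2=pM_2$, it follows that $\sfF M_1^t$ and $\sfV M_1^t$ lie in $\<\sfF v,\sfV v\>_W+pM_2$. More precisely, $\ol M_1^t\subseteq \sfV\ol M_2=\<f_1,\dots,f_g\>_k$, which is killed by $\sfF,\sfV$ modulo $p M_2$ after lifting. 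Hence the images of $\sfF M_1^t$ and $\sfV M_1^t$ in $\ol M_2$ vanish, except through lifts that touch $v$, and $v\notin M_1^t$ when $y\notin T$.

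\textbf{Step 2: the generator of $M_0$.} Since $y\notin T$, \eqref{eq:M0bar} applies. Choose a lift $w=[1]v+[u]\sum [t_i] f_i$ modulo $M_1^t$, where we use $E_1$ to represent $v$ and $F_1$ to represent its $f$-analogue. Then $\sfF w\equiv \sum t_i^p f_i=F_2$ and $\sfV w\equiv -\sum t_i^{p^{-1}}f_i=-F_g$ modulo $pM_2$. The $u$-terms contribute only multiples of $p$, because $\sfF f_i=-pe_i$ and $\sfV f_i=pe_i$. Therefore
\[ \ol{(\sfF,\sfV)M_0}=\<F_2,F_g\>_k. \]

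\textbf{Step 3: linear independence.} This is the main obstacle: we must show that $F_2$ and $F_g$ are linearly independent. Dependence means that $(t_i^p)$ and $(t_i^{p^{-1}})$ are proportional, i.e. $t^{p^2}$ is proportional to $t$, which says $t\in \bbP^{g-1}(\F_{p^2})$. That case is excluded by hypothesis.

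Combining the steps gives $a(M_0)=g-2$. In this step we avoid the basis assumption on $\mathbb T$, and therefore any need to exclude $\F_p$-hyperplanes, by arguing directly with the coordinate vectors. One care point remains. If we do not assume that $E_1,F_1$ is a basis of $\ol M_1/\ol M_1^t$, we instead write the generator of $M_0$ modulo $M_1^t$ as $a v+b x$ with $x\in \sfV M_2$. We have $a\neq 0$, because otherwise $M_0\subseteq \sfV M_2$ and hence $M_0=\sfV M_2$, i.e. $y\in T$. The images under $\sfF,\sfV$ then depend only on $av$, and the computation above goes through unchanged.
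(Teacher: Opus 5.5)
There is a genuine gap at the start of your argument. Your Steps 2 and 3 are correct: they are an explicit version of the paper's identity $(\sfF,\sfV)M_0+pM_2=(\sfF,\sfV)M_1$, a module of length $2$ over $pM_2$. However, the formula $a(M_0)=g-\dim_k (\sfF,\sfV)M_0/pM_2$ presupposes $pM_2\subseteq(\sfF,\sfV)M_0$, and you never prove this. Write $N=(\sfF,\sfV)M_0$ and $\ol N=(N+pM_2)/pM_2$. In general one only has
\[
a(M_0)=\bigl(g-\dim_k\ol N\bigr)+\operatorname{length}\bigl(pM_2/(N\cap pM_2)\bigr),
\]
so $\dim_k\ol N=2$ by itself gives only $a(M_0)\ge g-2$.

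The inclusion is not automatic. $\sfF M_0$ has the same index $g$ in $M_0$ as $pM_2$, and it differs from $pM_2$ because $y\notin T$; hence $\sfF M_0\not\supseteq pM_2$, and likewise for $\sfV M_0$. One must show that the two together fill $pM_2$. Your Step 1 addresses the opposite direction. It shows $\sfF M_1^t,\sfV M_1^t\subseteq pM_2$, which holds outright since $M_1^t\subseteq\sfV M_2$; the remarks about lifts ``touching $v$'' and $v\notin M_1^t$ play no role. That justifies discarding these modules when computing $\ol N$, but says nothing about whether they generate $pM_2$.

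The missing ingredient is the heart of the paper's proof: $\sfF M_1^t+\sfV M_1^t=pM_2$ when $t\notin\calF(\F_{p^2})$. Both are submodules of colength one in $pM_2$, because $M_1^t\subseteq_1\sfV M_2$. They are distinct because $M_1^t$, being dual to the non-superspecial $M_1$, is not superspecial.

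Concretely, lift \eqref{eq:M1tbar}. Both submodules contain $p\sfV M_2=\sfF(pM_2)=\sfV(pM_2)$. In $pM_2/p\sfV M_2\simeq\<e_1,\dots,e_g\>_k$ they become the hyperplanes $\sum_i x_it_i^{p}=0$ and $\sum_i x_it_i^{p^{-1}}=0$. These differ exactly under the non-proportionality of $(t_i^p)$ and $(t_i^{p^{-1}})$ that you establish in Step 3. Since $M_1^t\subseteq M_0$, this gives $pM_2\subseteq N$, and your computation then correctly yields $a(M_0)=g-2$.
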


\begin{proof}
We have $M_1^t \subseteq_1 M_0 \subseteq_1 M_1$ and $M_1^t \subseteq_1 \sfV M_2 \subseteq_1 M_1$. 
Then 
\[ \sfF M_0 \subseteq_1 \sfF M_1, \quad  \sfV M_0 \subseteq_1 \sfV M_1,  \quad\sfF M_1^t \subseteq_1 p  M_2 \subseteq_1 \sfF M_1, \quad \sfV M_1^t \subseteq_1 p M_2 \subseteq_1 \sfV M_1. \]
Since $M_1$ is not superspecial by our assumption that $t \not\in \mathcal{F}(\mathbb{F}_{p^2})$, we have that $\sfF M_1^t\neq \sfV M_1^t$ are distinct submodules of length 1 in $pM_2$, which implies that $\sfF M_1^t+\sfV M_1^t=pM_2$. 
On the other hand, since $M_0\neq \sfV M_2$ by our assumption that $y \not\in T$, we have $\sfF M_0 \neq pM_2$ and $\sfV M_0 \neq pM_2$; both $\sfF M_0$ and $\sfV M_0$ are submodules of lengh 1 of $\sfF M_1$ and $\sfV M_1$, respectively. Therefore, $\sfF M_0+pM_2=\sfF M_1$ and $\sfV M_0+pM_2=\sfV M_1$, and hence $\sfF M_0+\sfV M_0+pM_2=(\sfF, \sfV) M_1$.

Since $(\sfF,\sfV)M_0\supseteq (\sfF,\sfV)M_1^t=pM_2$, we have 
\begin{equation}
    \label{eq:FVM0}
    (\sfF,\sfV)M_0=(\sfF,\sfV)M_0+pM_2=(\sfF,\sfV)M_1.
\end{equation}
It follows that 
\[ pM_2 \subseteq_2 (\sfF,\sfV)M_1=(\sfF,\sfV)M_0 \subseteq M_0, \quad \text{ and } \quad pM_2 \subseteq_g M_0,  \]
and hence that $a(M_0)=g-2$.
\end{proof}

We summarise the results above in the following proposition.
\begin{proposition}\label{prop:aM0}
Let $y=(t,u) \in \mathcal{P}_{\mu}(k)$ and $(M_{\bullet}) = (M_0 \subseteq M_1 \subseteq M_2)$ the corresponding chain of Dieudonn{\'e} modules.
\begin{itemize}
    \item [(i)] $a(M_0)=g$ if and only if either $y\in T$ or $y\in \calP_\mu(\F_{p^2})$ (i.e. $t\in \calF(\F_{p^2})$ and $u \in \bbP_t(\F_{p^2})$);
    \item [(ii)] $a(M_0)=g-1$ if and only if $t\in 
    \calF(\F_{p^2})$ and $u\not\in \bbP^1(\F_{p^2})$;
    \item[(iii)] $a(M_0) = g-2$ if and only if $t\not\in \calF(\F_{p^2})$ and $y\notin T$.
\end{itemize}
\end{proposition}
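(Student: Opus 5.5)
The plan is to observe that the three cases in the statement partition $\calP_\mu(k)$ according to whether $y\in T$ and whether $t\in\calF(\F_{p^2})$. It then suffices to determine $a(M_0)$ in each case using the three preceding lemmas, and the ``if and only if'' statements follow because the resulting values are mutually exclusive.

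First, for $y\in T$ we have $M_0=\sfV M_2$, and we already computed $a(y)=g$ right after the definition of $T$. Second, for $t\in\calF(\F_{p^2})$ and $y\notin T$, the lemma preceding the last one gives $a(M_0)\ge g-1$, with equality to $g$ exactly when $u\in\bbP^1_t(\F_{p^2})$. Here I would note that the point $u=\infty$ of $\bbP^1_t$ corresponds to $T$ and is $\F_{p^2}$-rational; this is consistent, since $T$ gives $a=g$. So in this case $a(M_0)=g$ if $u$ is $\F_{p^2}$-rational and $a(M_0)=g-1$ otherwise. Third, for $t\notin\calF(\F_{p^2})$ and $y\notin T$, the last lemma gives $a(M_0)=g-2$.

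Combining these: $a=g$ occurs exactly on $T\cup\calP_\mu(\F_{p^2})$, since a point of $\calP_\mu(\F_{p^2})$ has $t\in\calF(\F_{p^2})$ and $u$ rational. The value $a=g-1$ occurs exactly when $t$ is rational and $u$ is not, and such a $y$ automatically lies outside $T$. The value $a=g-2$ occurs exactly when $t$ is irrational and $y\notin T$. Since every $y$ falls in exactly one case and $g,g-1,g-2$ are distinct, each implication reverses, which gives the equivalences.

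The only delicate step is the identification in (i) of $\calP_\mu(\F_{p^2})$ with pairs $(t,u)$ where $t\in\calF(\F_{p^2})$ and $u\in\bbP^1_t(\F_{p^2})$. This uses that the $\bbP^1$-bundle $\pi$ is defined over $\F_{p^2}$, so the fibre over a rational $t$ is $\bbP(\calV_t)$ with its induced $\F_{p^2}$-structure. I would also check the rationality criterion in the earlier lemma: $M_0/M_1^t$ must be $\F_{p^2}$-rational in the superspecial module $M_1/M_1^t$, and this matches the rational structure on that fibre.
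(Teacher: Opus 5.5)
Your proposal is correct and follows the paper's argument: the paper presents this proposition as a summary of the three preceding lemmas and the observation $a(y)=g$ on $T$, using the same case split by whether $y\in T$ and whether $t\in\calF(\F_{p^2})$. The compatibility of $\F_{p^2}$-structures that you flag does hold: since $E_0$ has Frobenius $-p$, the operator $\sfV^{-1}\sfF$ acts on $M_2$ as $-(\mathrm{id}\otimes\sigma^2)$ with respect to the $\Z_{p^2}$-lattice spanned by the $e_i,f_i$, so ``superspecial'' and ``$\F_{p^2}$-rational'' agree on each fibre $\bbP^1_t$.
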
    

Figure~1 below provides a schematic summary of the results above.

\begin{figure}[h!]\label{fig:Pmu}
\begin{center}
\includegraphics[width=10cm]{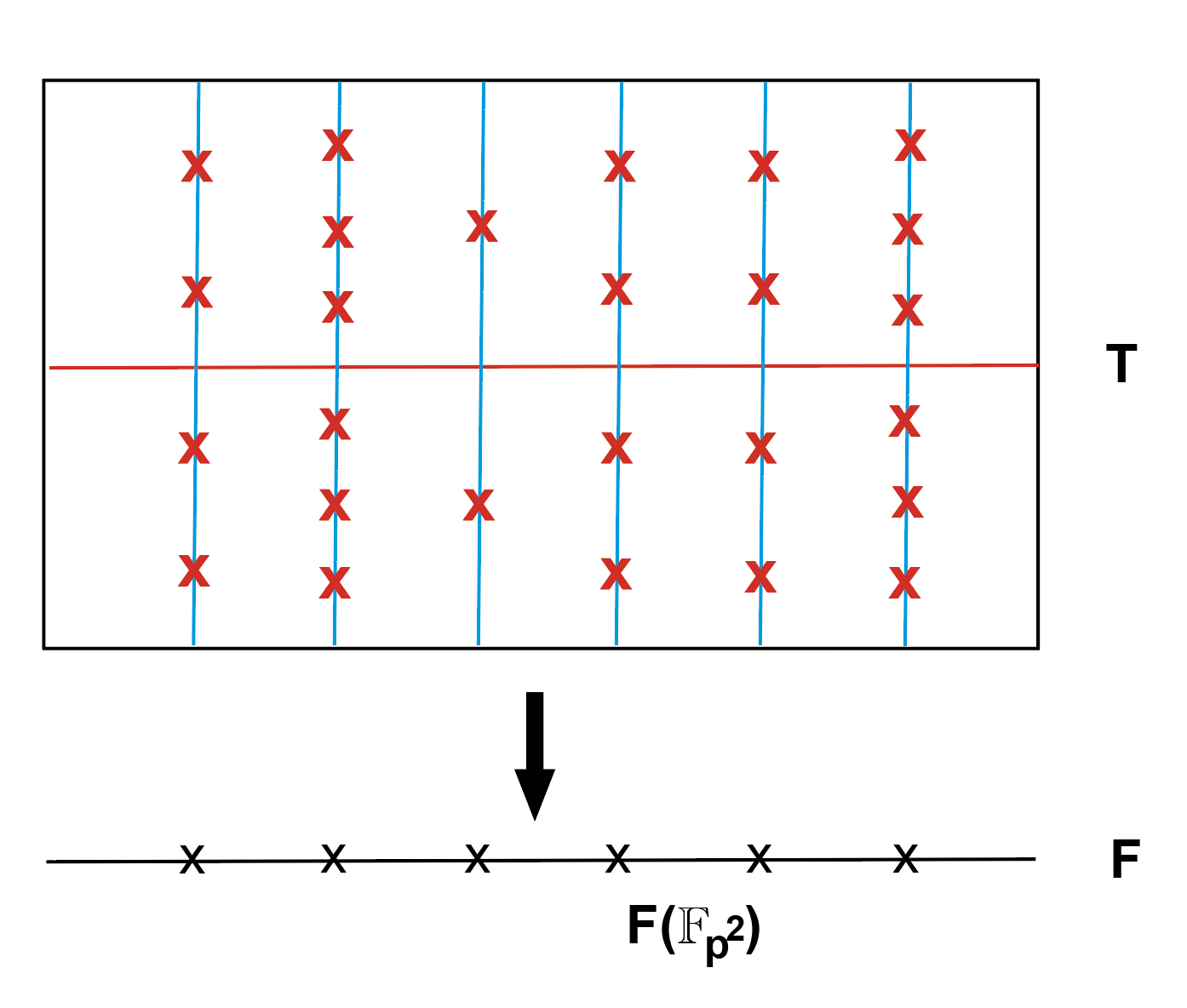}
\caption{A schematic picture of $\mathcal{P}_{\mu}$ as a $\mathbb{P}^1$-bundle over the Fermat hypersurface $\calF$. The points in red have $a$-number $g$, those in blue have $a$-number $g-1$; away from these points the $a$-number is $g-2$.}
\end{center}
\end{figure}

We end this section with the following observations. Let $M\subseteq N=M_2\otimes_W W[1/p]$ be a \dieu lattice. Denote by $\Phi$ the Zink operator which acts as $\Phi(M):=M+\sfV^{-1}\sfF M$. Then $M$ is superspecial if and only if $\Phi(M)=M$. Let $\wt M$ denote the smallest superspecial \dieu module containing $M$. We have $M\subseteq \Phi(M)$ and $\wt M=\Phi^n(M)$ for $n$ sufficiently large (in fact, if $n\ge g-1$). From the construction of $\wt M$, we have $\End_{\rm DM}(M)\subseteq \End_{\rm DM}(\wt M)$, and $\End_{\rm DM}(M)$ can be computed by 
\[ \End_{\rm DM}(M)=\{\alpha\in \End_{\rm DM}(\wt M): \alpha(M)\subseteq M\} . \]
Let $\calH\subseteq \bbP^{g-1}_{\F_{p^2}}$ denote the union of all hyperplanes in $\bbP^{g-1}_{\F_{p^2}}$ defined over $\F_{p^2}$. 
By slight abuse of notation, we let $\sigma$ also denote the Frobenius map $x \mapsto x^p$ on $k$.

\begin{lemma}\label{lm:minisog}
   Let $y=(t,u)\in \calP_\mu'$ be a point with $a(y)=g-2$, and $(M_\bullet)$ the chain of \dieu modules corresponding to y. Then $\wt M_0=M_2$ if and only if $t\notin\calH$.
\end{lemma}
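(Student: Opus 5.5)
The plan is to compare $\wt M_0$ with $\wt M_1$ and then compute $\wt M_1$ explicitly by iterating the Zink operator $\Phi$. Since $M_2$ is superspecial and $M_0\subseteq M_2$, we always have $\wt M_0\subseteq \wt M_1\subseteq M_2$. So the question is whether the iterates $\Phi^n(M_0)$ fill up all of $M_2$.

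\textbf{Step 1: $\wt M_0=\wt M_1$.} I would show $M_1\subseteq \Phi^{m}(M_0)$ for some small $m$, probably $m\le 2$. The input is the identity \eqref{eq:FVM0}, $(\sfF,\sfV)M_0=(\sfF,\sfV)M_1$, proved above for $a(y)=g-2$ (here $t\notin \calF(\F_{p^2})$ and $y\notin T$).
\begin{itemize}
\item First, $\sfV^{-1}\sfF M_0+M_0$ contains $\sfV^{-1}$ of $\sfF M_0$. Since $\sfF M_0+pM_2=\sfF M_1$ and $\sfV^{-1}(pM_2)=\sfF M_2=\sfV M_2$, one should get
\[ \Phi(M_0)+\sfV M_2\supseteq \sfV^{-1}\sfF M_1\ni \sfV^{-1}\sfF v. \]
\item Second, I need $\sfV M_2\subseteq \Phi(M_0)$ itself. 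For this I would argue with lengths: $M_0$ and $\sfV M_2$ are distinct colength-one submodules of $M_1$, so $M_0+\sfV M_2=M_1$.
\item Then I check that the extra generator of $\sfV M_2$ modulo $M_1^t$ is produced by $\sfV^{-1}\sfF$ applied to the vector $E_1+uF_1$ lifted to $M_0$. Its $F_1$-component maps into $\sfV^{-1}\sfF(\sfV M_2)\subseteq \sfV M_2$, and its $E_1$-component maps to $-\sum [t_i^{p^2}]e_i$ modulo $\sfV M_2$.
\end{itemize}
If this direct route is messy, I would fall back on a monotonicity argument: $\Phi$ is inclusion-preserving, $\Phi(M_0)\ne M_0$ because $M_0$ is not superspecial, and lengths force $\Phi^2(M_0)\supseteq M_1$ once one knows $\Phi(M_0)\not\subseteq M_1$ or $\Phi(M_0)\supseteq \sfV M_2$.

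\textbf{Step 2: compute $\wt M_1$.} We have $M_1=\langle v\rangle_W+\sfV M_2$ with $v=\sum[t_i]e_i$. Note that $\sfV^{-1}\sfF$ is $\sigma^2$-linear, preserves $\sfV M_2$, and sends $e_i\mapsto -e_i$. Hence, by induction,
\[ \Phi^n(M_1)=\sfV M_2+\big\langle v,\ \sigma^2(v),\ \dots,\ \sigma^{2n}(v)\big\rangle_W, \qquad \sigma^{2j}(v)=\textstyle\sum_i[t_i^{p^{2j}}]e_i. \]
So $\wt M_1=\sfV M_2+L$, where $L$ is the $W$-span of all $\sigma^{2j}(v)$. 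Modulo $\sfV M_2$ this is the $k$-span of the vectors $t^{(p^{2j})}$ in $k^g=M_2/\sfV M_2$. That span is the smallest $\F_{p^2}$-rational subspace containing the line through $t$: it is stable under $\sigma^2$, and any $\sigma^2$-stable subspace descends to $\F_{p^2}$.

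\textbf{Step 3: conclude.} By Step 2, $\wt M_1=M_2$ if and only if the $t^{(p^{2j})}$ span $k^g$. This happens if and only if $t$ lies in no hyperplane defined over $\F_{p^2}$, which is a Moore-determinant criterion. Conversely, if $t\in\calH$, say $t\in V(\ell)$ with $\ell$ an $\F_{p^2}$-rational linear form, then
\[ \sfV M_2+\{\textstyle\sum a_ie_i:\ \ell(a)\equiv 0\}, \]
lifted to $W$, is a proper superspecial lattice containing $M_1\supseteq M_0$. Hence $\wt M_0\ne M_2$.

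I expect the main obstacle to be Step 1, specifically showing $\sfV M_2\subseteq\Phi(M_0)$ (equivalently $\wt M_0\supseteq M_1$). This is where the hypothesis $a(y)=g-2$, via \eqref{eq:FVM0} and $t\notin\calF(\F_{p^2})$, must genuinely be used. I would first try the explicit computation in the basis $E_i,F_i$ of \eqref{eq:EiFi}.
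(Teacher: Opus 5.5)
Your Steps 2 and 3 are correct and coincide with the paper's argument. There, $\wt M_1/\sfV M_2$ is the $k$-span of the $\sigma^{2j}(\bar v)$, which is all of $M_2/\sfV M_2$ exactly when $t\notin\calH$. Your converse for $t\in\calH$ rightly needs no Step 1.

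The gap is in Step 1, and the mechanism you propose there fails. You want the generator of $\sfV M_2/M_1^t$ to come from $\sfV^{-1}\sfF x_0$, where $x_0\in M_0$ lifts $E_1+uF_1$. But, as you note yourself, $\sfV^{-1}\sfF x_0\equiv-\sigma^2(\bar v)\not\equiv 0$ modulo $\sfV M_2$. Moreover, consider a combination $a x_0+b\,\sfV^{-1}\sfF x_0$ in the $k$-vector space $M_2/M_1^t$. It lies in $\sfV M_2$ only if $a\bar v-b\,\sigma^2(\bar v)=0$ in $M_2/\sfV M_2$. This forces $a=b=0$, since $\bar v$ and $\sigma^2(\bar v)$ are independent for $t\notin\calF(\F_{p^2})$. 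So $M_0$ together with $\sfV^{-1}\sfF x_0$ never produces $\sfV M_2$. Your fallback is conditional on exactly the inclusion $\Phi(M_0)\supseteq\sfV M_2$ that is missing.

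The missing observation is that $\sfV\Phi(M)=\sfV M+\sfF M=(\sfF,\sfV)M$, so $\Phi(M)$ depends only on $(\sfF,\sfV)M$. By \eqref{eq:FVM0}, $(\sfF,\sfV)M_0=(\sfF,\sfV)M_1$, and $\sfV$ is injective on $N$. Hence $\Phi(M_0)=\Phi(M_1)\supseteq M_1$ and $\wt M_0=\wt M_1$; this is the paper's one-line proof of Step 1.

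If you prefer to keep your generator-chasing, the element you need comes from $M_1^t\subseteq M_0$, not from $E_1+uF_1$. Both $M_1^t$ and $\sfV^{-1}\sfF M_1^t$ have colength one in $\sfV M_2=\sfV^{-1}(pM_2)$. They are distinct because $\sfF M_1^t\neq\sfV M_1^t$, as shown in the proof that $a(M_0)=g-2$. Hence $\sfV M_2=M_1^t+\sfV^{-1}\sfF M_1^t\subseteq\Phi(M_0)$. Your observation $M_0+\sfV M_2=M_1$ then gives $M_1\subseteq\Phi(M_0)$, i.e. $m=1$.
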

\begin{proof}
    We write $M_2/\sfV M_2=\<e_1,\dots, e_g\>\otimes_{\F_{p^2}} k$, on which $\sigma^2$ acts via the second factor $k$. Any $k$-subspace of it is defined over $\F_{p^2}$ if and only if it is $\sigma^2$-stable, if and only if it is the image modulo $\mathsf{V} M_2$ of a superspecial \dieu module $M$ with $\sfV M_2\subseteq M\subseteq M_2$. 
    
    By~Proposition~\ref{prop:aM0}.(iii) and Equation~\eqref{eq:FVM0}, we have $\mathsf{V}\Phi(M_0)=(\sfF,\sfV)M_0=(\sfF,\sfV)M_1=V \Phi(M_1)$ and hence $\Phi(M_0)=\Phi (M_1)$. Thus, it suffices to prove that $\wt M_1=M_2$ if and only if $t\notin \calH$. Since $\wt M_1/\sfV M_2$ is the $k$-subspace spanned by $\bar v, \sigma^2(\bar v), \dots, \sigma^{2(g-1)}(\bar v)$, we see that $\wt M_1=M_2$ if and only if $\bar v, \sigma^2(\bar v), \dots, \sigma^{2(g-1)}(\bar v)$ form a $k$-basis of $M_2/\sfV M_2$, which happens if and only if $t\notin \calH$.
\end{proof}

\section{Endomorphisms and automorphisms}

We write $O_p:=\End(E_0)\otimes \Zp=\Z_{p^2}[\Pi]$, where $\Pi$ is a uniformiser of $O_p$ which satisfies the relations $\Pi^2=-p$ and $\Pi a =\sigma(a) \Pi$ for $a\in \Z_{p^2}$. Then $\End_{\rm DM}(M)=\Mat_g(\Z_{p^2}[\Pi])=\Mat_{g}(\Z_{p^2})\oplus \Mat_g(\Z_{p^2})\Pi$, where $\Pi$ acts the same as the Frobenius $\sfF$ on the basis $\{e_i, f_i\}$, namely, $\Pi(e_i)=f_i$ and $\Pi(f_i)=-pe_i$ for all $i=1, \ldots, g$. With respect to the basis $\{e_i, f_i\}$, we have an embedding of $\Zp$-algebras
\[ \End_{\rm DM}(M_2)=\Mat_g(\Z_{p^2}[\Pi]) \hookrightarrow \Mat_{2g}(\Z_{p^2}), \quad A+B\Pi \mapsto \begin{pmatrix}
    A & - p B^{\sigma} \\
    B & A^\sigma
\end{pmatrix}.\]
Reduction modulo $p$ gives a surjective homomorphism
\[ m_p: \End_{\rm DM}(M_2) \onto \Mat_g(\Z_{p^2}[\Pi])/(p)=\Mat_g(\F_{p^2}[\Pi])=\left \{\begin{pmatrix}
A & 0 \\
B & A^{(p)}\end{pmatrix} : A, B\in \Mat_g(\F_{p^2}) \right \}
\]
where $A^{(p)}=(a_{ij}^p)$ is the matrix obtained by raising the entries of $A=(a_{ij})$  to the $p$th power. Since $\langle\ ,\ \rangle_\mu=p\langle\ ,\ \rangle_2$, we have $\Aut_{\rm DM}(M_2,\langle\ ,\ \rangle_2)=\Aut_{\rm DM}(M_2,\langle\ ,\ \rangle_\mu)$.
The perfect alternating pairing $\<\ ,\ \>_\mu$ on $M_2$ induces the involution $\gamma=\left(\begin{smallmatrix} A & 0 \\ B & A^{(p)} \end{smallmatrix}\right)\mapsto \gamma^*:=\left(\begin{smallmatrix} (A^{(p)})^T  & 0 \\ -B^T & A^{T} \end{smallmatrix}\right)$ on $m_p(\mathrm{End}_{\mathrm{DM}}(M_2))$, where $A^T$ denotes the transpose of $A$. Then 
\[
m_p(\Aut_{\rm DM}(M_2,\<\ ,\ \>_2))=\{\gamma\in \GL_g(\F_{p^2}[\Pi]): \gamma^* \gamma=\bbI_g \}.
\]

In the following we assume that $t\in \calF(k)\setminus \calH$ and $y\notin T$. 

Then $M_1=M_0+\sfV M_2$ is uniquely determined by $M_0$. 
By Lemma~\ref{lm:minisog}, we have $\wt M_0=\wt M_1=M_2$ and hence $\End_{\rm DM}(M_0)\subseteq \End_{\rm DM}(M_1) \subseteq \End_{\rm DM}(M_2)$.
\begin{definition}
    \label{def:endt}
    For any $t=(t_1,\dots, t_g)\in k^g-\{0\}$ (viewed as a column vector), define an an $\F_{p^2}$-subalgebra of $\Mat_g(\F_{p^2})$ by 
\[ \End(t):=\{A\in \Mat_g(\F_{p^2}): A\cdot t\in k \cdot t  \}. \]
\end{definition}

\begin{lemma}
    If $t\in \bbP^{g-1}(k)\setminus \calH$, then the $\F_{p^2}$-algebra homomorphism $\End(t)\to k$, $A\mapsto \alpha_A$, the eigenvalue of $t$, is injective and $\End(t)\simeq \F_{p^{2m}}$ for some $m|g$.
\end{lemma}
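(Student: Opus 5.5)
The map $A\mapsto \alpha_A$ is well defined because $t\neq 0$: for $A\in\End(t)$ there is a unique scalar $\alpha_A\in k$ with $A\cdot t=\alpha_A t$. It is additive, multiplicative and $\F_{p^2}$-linear, since $(AB)t=A(\alpha_B t)=\alpha_B\alpha_A t$, and it sends $\bbI_g$ to $1$. So $\End(t)$ is an $\F_{p^2}$-subalgebra of $\Mat_g(\F_{p^2})$ and $A\mapsto\alpha_A$ is an $\F_{p^2}$-algebra homomorphism to $k$.

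For injectivity, suppose $\alpha_A=0$, so $A t=0$. Each row $a_i=(a_{i1},\dots,a_{ig})$ of $A$ has entries in $\F_{p^2}$ and satisfies $\sum_j a_{ij}t_j=0$. If some row were nonzero, then $t$ would lie on the $\F_{p^2}$-rational hyperplane $V(\sum_j a_{ij}X_j)$, contradicting $t\notin\calH$. Hence $A=0$. The same argument applied to $A-\alpha\bbI_g$ shows the finer statement that $\alpha_A$ determines $A$.

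It follows that $\End(t)$ embeds into the field $k$, so it is a commutative finite-dimensional $\F_{p^2}$-algebra without zero divisors. A finite integral domain is a field, so $\End(t)\simeq\F_{p^{2m}}$ with $m=\dim_{\F_{p^2}}\End(t)$.

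It remains to show $m\mid g$. The space $\F_{p^2}^g$ is a module over the field $\End(t)\simeq\F_{p^{2m}}$ by matrix multiplication, hence a vector space over it. Therefore $g=\dim_{\F_{p^2}}\F_{p^2}^g=m\cdot\dim_{\F_{p^{2m}}}\F_{p^2}^g$, which gives $m\mid g$. No step here presents a real obstacle; the only point needing care is the use of $t\notin\calH$ in the injectivity step.
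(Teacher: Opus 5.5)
Your proof is correct and follows essentially the paper's argument: $t\notin\calH$ makes $t_1,\dots,t_g$ linearly independent over $\F_{p^2}$ (your row-by-row hyperplane argument says exactly this), which gives injectivity; then $\End(t)$ is a finite subring of $k$, hence a field $\F_{p^{2m}}$, and $m\mid g$ because $\F_{p^2}^g$ becomes an $\End(t)$-vector space. The one blemish is the aside about $A-\alpha\bbI_g$: when $\alpha\notin\F_{p^2}$ this matrix does not have entries in $\F_{p^2}$, so the hyperplane argument does not apply to it as stated; the aside is unnecessary anyway, since $\alpha_A=\alpha_B$ gives $(A-B)t=0$ and hence $A=B$.
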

\begin{proof}
If $\alpha_A=0$, then $A t=0$. Since $t\notin \calH$, the components $t_1,\dots,t_g$ are $\F_{p^2}$-linearly independent, so $A=0$ and the first statement follows. Particularly, $\End(t)\simeq \F_{p^{2m}}$ for some $m\ge 1$. Since $\End(t) \subseteq \Mat_g(\F_{p^2})=\End((\F_{p^2})^g)$, $(\F_{p^2})^g$ becomes a vector space over $\F_{p^{2m}}$ and hence $m|g$.  
\end{proof}

Let $\calQ\subseteq \bbP^{g-1}_{\F_{p^2}}$ be the union of all conics which are defined over $\F_{p^2}$. Then $\calH\subseteq \calQ$, since $\calQ$ contains degenerate conics $Q=H_1 H_2$ with $H_1, H_2$ in $\calH$. Set $\calF^0:=\calF\setminus \calQ$ and $\calP'_{\mu, \calF^0}:=\calP'_\mu \times_{\calF} \calF^0$. We see from Remark~\ref{rem:trivialization} that there is an isomorphism $\calP'_{\mu, \calF^0} \simeq \calF^0 \times \bbA^1$ of algebraic varieties over $k$.

\begin{proposition}\label{prop:mpEndM1}
    If $t\in \calF^0$, then 
    \[ m_p(\End_{\rm DM}(M_1))=\left\{\begin{pmatrix}
        a \bbI_g & 0 \\
        B & a^{p} \bbI_g
    \end{pmatrix}\in \Mat_{2g}(\F_{p^2}): a\in \F_{p^2}, B\in \Mat_g(\F_{p^2})\, \right\}, \]
    \[ m_p(\Aut_{\rm DM}(M_1,\<\ ,\ \>_1))=\left\{\begin{pmatrix}
        a \bbI_g & 0 \\
        B & a^{p} \bbI_g
    \end{pmatrix}\in \GL_{2g}(\F_{p^2}): a^{p+1}=1, B=B^T\, \right\}. \]
\end{proposition}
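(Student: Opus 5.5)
The plan is to reduce everything to linear algebra modulo $p$ and then to use the condition $t\notin\calQ$ to force $A$ to be scalar. Since $t\notin\calH$, Lemma~\ref{lm:minisog} gives $\wt M_1=M_2$, so $\End_{\rm DM}(M_1)=\{\alpha\in\End_{\rm DM}(M_2):\alpha(M_1)\subseteq M_1\}$. Because $pM_2\subseteq \sfV M_2\subseteq M_1$, the condition $\alpha(M_1)\subseteq M_1$ depends only on $m_p(\alpha)$: it says that $m_p(\alpha)$ stabilises $\ol M_1=\<\bar v\>_k+\<f_1,\dots,f_g\>_k$. Since $m_p$ is surjective, $m_p(\End_{\rm DM}(M_1))$ is exactly the set of $\gamma=\left(\begin{smallmatrix}A&0\\B&A^{(p)}\end{smallmatrix}\right)$ stabilising $\ol M_1$.

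Next I would read off this stabiliser condition. Such a $\gamma$ sends $e_i\mapsto Ae_i+Bf_i$ and $f_i\mapsto A^{(p)}f_i$, so $\<f_1,\dots,f_g\>$ is always stable. Since $A$ has entries in $\F_{p^2}$, we get $\gamma\bar v\equiv\sum_i (At)_ie_i$ modulo $\<f_i\>$. Hence $\gamma$ stabilises $\ol M_1$ if and only if $At\in k\,t$, that is, $A\in\End(t)$, with $B$ arbitrary. The first displayed equality is therefore equivalent to $\End(t)=\F_{p^2}\cdot\bbI_g$ for $t\in\calF^0$.

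This is the main step, and I would argue it by contradiction. Suppose $A\in\End(t)$ is not scalar, with $At=\alpha t$. For any $C\in\Mat_g(\F_{p^2})$ we have $t^TCAt=\alpha\, t^TCt=t^TA^TCt$, so $t^T(CA-A^TC)t=0$. Now take $C$ alternating. Then $CA-A^TC$ is symmetric, because $(CA)^T=-A^TC$, so $t$ lies on the $\F_{p^2}$-rational conic defined by $x^T(CA-A^TC)x$. It remains to choose an alternating $C$ with $CA\neq A^TC$. If $CA=A^TC$ held for every $C=E_{ij}-E_{ji}$, then comparing entries, which needs $g\ge 2$, would force $A$ to be scalar. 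So some such $C$ gives a nonzero symmetric matrix, and since $p>2$ the symmetric matrix and the quadratic form determine each other. This shows $t\in\calQ$, a contradiction. I expect this to be the crux, and the point to check carefully is the entrywise verification that ``self-adjoint for all alternating forms'' forces $A$ to be scalar.

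For the automorphism group, note that the pairing on $M_1$ is the restriction of the common pairing, so $\Aut_{\rm DM}(M_1,\<\,,\>_1)$ consists of the elements of $\Aut_{\rm DM}(M_2,\<\,,\>_2)$ that stabilise $M_1$. Its image under $m_p$ is therefore contained in the intersection of the set above with $\{\gamma^*\gamma=\bbI\}$. Take $A=a\bbI_g$. The upper-left block of $\gamma^*\gamma$ is $a^{p+1}\bbI_g$, and the lower-left block is $-B^Ta+aB$, so the conditions are $a^{p+1}=1$ and $B=B^T$. For the reverse inclusion I need every such $\gamma$ to lift to a unitary automorphism of $M_2$. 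The condition on $M_1$ is automatic after lifting, since it only depends on $m_p$. For the lifting itself I would use the given description $m_p(\Aut_{\rm DM}(M_2,\<\,,\>_2))=\{\gamma:\gamma^*\gamma=\bbI\}$. This is the second point needing care, and if needed I would justify it by smoothness of the unitary group scheme over $\Zp$ together with Hensel's lemma.
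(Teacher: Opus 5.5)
Your reduction of $m_p(\End_{\rm DM}(M_1))$ to $\End(t)$ and your $\gamma^*\gamma$ computation are the same as the paper's. You differ in the key step $\End(t)=\F_{p^2}$. The paper gets it by citing \cite[Proposition~3.4]{karemaker-yu:SSEOOC}, using that the $t_it_j$ are $\F_{p^2}$-linearly independent. You argue directly instead, and that argument has a genuine gap: the proposition has no hypothesis on $p$, but you invoke $p>2$. At $p=2$ your step is vacuous, not just unjustified. For alternating $C$ you have $S=CA-A^TC=CA+(CA)^T$, whose diagonal entries are $2(CA)_{kk}$. So in characteristic $2$ the form $x^TSx=\sum_k S_{kk}x_k^2+2\sum_{k<l}S_{kl}x_kx_l$ is identically zero, and $t^TSt=0$ says nothing about $t$. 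In fact, for $C=E_{ij}-E_{ji}$ your form is exactly $2\bigl(x_i(Ax)_j-x_j(Ax)_i\bigr)$, so the symmetrisation is what introduces the factor $2$.

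The fix is to skip the symmetrisation and work with quadratic forms directly. Take $C=E_{ij}$, i.e.\ use the $2\times 2$ minors $q_{ij}(x)=x_i(Ax)_j-x_j(Ax)_i$ of the $g\times 2$ matrix $[x\mid Ax]$. These have coefficients in $\F_{p^2}$ and vanish at $t$ because $At\in k\,t$. For $i\neq j$, the coefficient of $x_i^2$ in $q_{ij}$ is $a_{ji}$ and that of $x_ix_j$ is $a_{jj}-a_{ii}$. So if $A$ is not scalar, some $q_{ij}$ is a nonzero quadratic form over $\F_{p^2}$ vanishing at $t$, equivalently a nontrivial $\F_{p^2}$-relation among the $t_it_j$. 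Hence $t\in\calQ$, in every characteristic. With this change your proof covers the proposition as stated and is a self-contained substitute for the citation. As written it only covers $p>2$, which suffices for Theorem~\ref{thm:OCPmu} but not for the statement itself. Your use of the displayed description of $m_p(\Aut_{\rm DM}(M_2,\langle\ ,\ \rangle_2))$ for the reverse inclusion matches the paper, which also takes that description as given.
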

\begin{proof}
    Since $\sfV M_2\subseteq M_1\subseteq M_2$ and $\sfV M_2 $ is preserved by $\End_{\rm DM}(M_2)$, the subalgebra
     $m_p(\End_{\rm DM}(M_1))$ consists of elements $\gamma\in m_p(\End_{\rm DM}(M_2))$ which preserve $M_1/\sfV M_2=\<\bar v\> = \< t_1 e_1 + \ldots + t_g e_g \>$. So we have 
\[ m_p(\End_{\rm DM}(M_1))=\left\{\begin{pmatrix}
        A & 0 \\
        B & A^{(p)} 
    \end{pmatrix}: A\in \End(t) \, \right\}.\]    
Since $t\notin \calQ$ by assumption, the elements $t_i t_j$ for $1\le i\le j\le g$ are $\F_{p^2}$-linearly independent; equivalently, by specialisation at $t_g=1$, the elements $1, t_i , t_i t_j$ for $1\le i \le j\le g-1$ are $\F_{p^2}$-linearly independent. Applying \cite[Proposition~3.4]{karemaker-yu:SSEOOC} with $k_0=\F_{p^2}$, we get $\End(t)=\F_{p^2}$, which proves the first statement. 

The image of $\Aut_{\rm DM}(M_1,\<\ ,\ \>_1)$ under $m_p$ consists of elements $\gamma$ as above which further satisfy that
\[ \gamma^* \gamma =
\begin{pmatrix}
        a^p \bbI_g & 0 \\
        -B^T & a \bbI_g
    \end{pmatrix}
    \begin{pmatrix}
        a \bbI_g & 0 \\
        B & a^{p} \bbI_g
    \end{pmatrix} =
    \begin{pmatrix}
        a^{p+1} \bbI_g & 0 \\
        a(B-B^T) & a^{p+1} \bbI_g
    \end{pmatrix}=\bbI_{2g}. \]
This gives the conditions $a^{p+1}=1$ and $B=B^T$, as desired.
\end{proof}

Let $\gamma=\left ( \begin{smallmatrix}
    a \bbI_g & 0 \\
    B & a^p \bbI_g
\end{smallmatrix}\right )\in m_p(\Aut(M_1,\<\ ,\ \>_1))$. Then $\gamma$ preserves the two-dimensional vector space $\overline{M}_1/\overline{M}_1^t = \langle E_1, F_1 \rangle_k$. The matrix representing $\gamma$ with respect to the basis $\{E_i,F_i\}$ is
\[ \gamma=\begin{pmatrix}
    a \bbI_g & 0 \\
    \bbT^{-1} B \bbT & a^p \bbI_g 
\end{pmatrix}.
\]
We write $\bbT^{-1} B \bbT=(d_{ij})$. Then
\[ \gamma(F_1)=a^p F_1, \quad \gamma(E_1)=a E_1 +\sum_{j=1}^g d_{j1} F_j\equiv aE_1+(d_{11}+\sum_{j=3}^{g-1} t_j' d_{j1}) F_1 \mod \ol M_1^t, \]
where $t_j'$ are the functions of $t_i$ given in Equation~\eqref{eq:tiprime}.
Thus, the  matrix representing $\gamma$ on $\ol M_1/\ol M_1^t$ with respect to the basis $\{E_1,F_1\}$ is given by
\[ \gamma=\begin{pmatrix}
    a & 0 \\
    d_B & a^p
\end{pmatrix}, \quad \text{ where } \quad d_B:=d_{11}+\sum_{j=3}^{g-1} t_j' d_{j1}. \]
We write $\bbT^*:=\det(\bbT) \bbT^{-1}=(t_{ij}^*)$ and $\bbT=(t_{ij})$, then $d_{ij}$ is the $(i,j)$-component of the matrix $\det(\bbT)^{-1} \bbT^* B \bbT$, which is $\det(\bbT)^{-1}\sum_{l,l'} t_{i l}^* b_{l, l'} t_{l' j}$. In particular, each $d_{ij}$ is a linear combination of $b_{l,l'}$ for $1\le l,l'\le g$ with coefficients which are rational functions in $t_1^{{1/p}},\dots, t_g^{{1/p}}$, and so is~$d_B$.
Since $B=B^T$ is symmetric by assumption, we may write $d_B=\sum_{1\le l \le l' \le g} b_{l, l'} e_{l, l}'$, where $e_{l, l'}$ is a rational function in $t_1^{{1/p}},\dots, t_g^{{1/p}}$, and we define an $\F_{p^2}$-vector subspace of $k$ by
\begin{equation}
    \label{eq:Dt}
   \calD_t:=\<e_{l,l'}(t) : 1\le l \le l'\le g\>_{\F_{p^2}}\subseteq k.  
\end{equation}

In order to have $\gamma \in m_p(\Aut(M_0,\<\ ,\ \>_0))$, the element $\gamma$ must satisfy $\gamma(\ol M_0)\subseteq \ol M_0$, that is,
\begin{equation}\label{eq:gammaM0bar}
\begin{pmatrix}
    a & 0 \\
    d_B & a^p
\end{pmatrix} \begin{pmatrix}
    1 \\
    u
\end{pmatrix}=\begin{pmatrix}
    a \\
    d_B+a^p u
\end{pmatrix}=a \begin{pmatrix}
    1 \\
    u
\end{pmatrix}, \quad \text{ i.e. } \quad d_B+(a^p-a)u=0.
\end{equation}

\begin{definition}\label{def:D}
   Let $\calD \subseteq \calP'_{\mu, \calF^0}$ be the horizontal divisor defined by 
    \[  \calD:=\{(t,u)\in  \calP'_{\mu, \calF^0}: u\in \calD_t \}. \]
\end{definition}

\begin{theorem}\label{thm:OCPmu}
    Let $y=(t,u)\in \calP'_\mu(k)$ and $(Y_\bullet, \lambda_\bullet)$ be the chain of polarised supersingular abelian varieties corresponding to $y$. If $y\in \calP'_{\mu, \calF^0}\setminus \calD$ and $p>2$, then $\Aut(Y_0,\lambda_0)=\{\pm 1\}$. 
\end{theorem}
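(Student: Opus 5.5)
We reduce the computation of $\Aut(Y_0,\lambda_0)$ to the local Dieudonné module and then pin it down by a congruence argument. Since $t\in\calF^0$ lies outside $\calH\subseteq\calQ$, Lemma~\ref{lm:minisog} gives $\wt M_0=M_2$. Hence
\[ \Aut(Y_0,\lambda_0)\hookrightarrow \Aut_{\rm DM}(M_0,\<\ ,\ \>_0)\subseteq \Aut_{\rm DM}(M_1,\<\ ,\ \>_1)\subseteq \Aut_{\rm DM}(M_2,\<\ ,\ \>_2). \]
The map $\Aut(Y_0,\lambda_0)\to \Aut_{\rm DM}(M_0)$ is injective because $\End(Y_0)\otimes\Zp\hookrightarrow\End_{\rm DM}(M_0)$. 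Moreover $\Aut(Y_0,\lambda_0)$ is a finite group.

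The plan has three steps.
\begin{enumerate}
\item Show that the image of $\Aut(Y_0,\lambda_0)$ under $m_p$ is contained in $\{\pm \bbI_{2g}\}$.
\item Show that the kernel of $m_p$ restricted to this finite group is trivial.
\item Conclude, since $-1$ is clearly an automorphism.
\end{enumerate}

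For step 1, take $\gamma\in\Aut(Y_0,\lambda_0)$. By Proposition~\ref{prop:mpEndM1}, $m_p(\gamma)=\left(\begin{smallmatrix} a\bbI_g&0\\ B& a^p\bbI_g\end{smallmatrix}\right)$ with $a^{p+1}=1$ and $B=B^T$. Because $\gamma$ preserves $M_0$, condition~\eqref{eq:gammaM0bar} gives $d_B=(a-a^p)u$. We split into two cases according to $a$.
\begin{itemize}
\item Suppose $a\notin\F_p$, so $a^p\ne a$. Then $u=d_B/(a-a^p)$. Since $d_B\in\calD_t$ and $(a-a^p)^{-1}\in\F_{p^2}$, we get $u\in\calD_t$, contradicting $y\notin\calD$.
\item Hence $a\in\F_p$ with $a^{p+1}=a^2=1$, so $a=\pm1$. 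Then \eqref{eq:gammaM0bar} forces $d_B=0$.
\end{itemize}
It remains to deduce $B=0$ from $d_B=0$. Since $d_B=\sum b_{l,l'}e_{l,l'}(t)$ with $b_{l,l'}\in\F_{p^2}$, this requires the functions $e_{l,l'}(t)$ to be $\F_{p^2}$-linearly independent for $t\in\calF^0$. This is the key linear-independence input, and I expect it to be the main obstacle. I would first try to unwind $d_B$ through $\bbT$: the coefficient of $F_1$ in $\gamma(E_1)$ modulo $\ol M_1^t$ is $\<\gamma(E_1),E_1\>_\mu/\<F_1,E_1\>_\mu$ up to sign. This equals $\sum_{l,l'} b_{l,l'}t_lt_{l'}/\sum t_i^2$, a quadratic form in $t$ divided by $\sum t_i^2$. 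Linear independence of the monomials $t_lt_{l'}$ over $\F_{p^2}$, which holds because $t\notin\calQ$ (as used in the proof of Proposition~\ref{prop:mpEndM1}), then yields $B=0$. If this identification of $d_B$ fails, the fallback is to argue that $\calD_t$ has $\F_{p^2}$-dimension equal to the number of symmetric entries, via the same conic-avoidance property. Granting independence, $m_p(\gamma)=\pm\bbI_{2g}$.

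For step 2, the kernel of $m_p$ on $\Aut(Y_0,\lambda_0)$ is the finite group $\Aut(Y_0,\lambda_0)\cap(1+p\End_{\rm DM}(M_2))$. For $p>2$ this is torsion-free by the standard logarithm argument: an element $1+p^rX$ of finite order $\ell$ leads to a contradiction, since $\Zp$-lattice congruence groups $1+p\Mat$ have no torsion when $p\ge3$. Thus $\gamma\mapsto m_p(\gamma)$ is injective on $\Aut(Y_0,\lambda_0)$, and combining with step 1 gives $\Aut(Y_0,\lambda_0)=\{\pm1\}$. The hypothesis $p>2$ enters exactly in step 2, and also in step 1 to ensure $1\ne-1$ so the two possible values of $a$ are distinct.
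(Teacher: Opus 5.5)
Your proposal is correct and follows the paper's proof. The paper also reduces to $m_p(\Aut(M_0,\langle\ ,\ \rangle_0))$ via injectivity of $m_p$ for $p>2$; it cites \cite[Lemma~6.16]{karemaker-yu:SSEOOC} where you prove torsion-freeness of $1+p\End_{\rm DM}(M_2)$ directly. It then uses $u\notin\calD_t$ to force $a=\pm1$ and $d_B=0$, derives $\sum_{i,j}b_{ij}t_it_j=0$ by showing $\gamma_0(E_1)\in\ol M_1^t$ (the same computation as your identification $d_B=\sum_{i,j}b_{ij}t_it_j/\sum_i t_i^2$), and concludes from the $\F_{p^2}$-linear independence of the $t_it_j$. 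One correction: $p>2$ is genuinely needed in that last step, because symmetry of $B$ only gives $2b_{ij}=0$ for $i<j$; your stated use of $p>2$ to separate $a=1$ from $a=-1$ plays no role.
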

\begin{proof}
    Let $(M_i,\<\ ,\ \>_i)$ be the chain of quasi-polarised \dieu modules associated to $(Y_\bullet, \lambda_\bullet)$. Since $p>2$, it follows from Lemma~\cite[Lemma~6.16]{karemaker-yu:SSEOOC} that the group homomorphism 
    \[ m_p: \Aut(Y_0,\lambda_0) \to m_p(\Aut(M_0,\<\ ,\ \>_0)) \]
    is injective. Thus, it suffices to show that the latter group is $\{\pm 1\}$.

    Let $\gamma \in m_p(\Aut(M_0,\<\,,\>_0))$. Since $u\notin \calD_t$, Equation~\eqref{eq:gammaM0bar} shows that then $a^p-a=0$ and $d_B=0$, since $d_B\in \calD_t$. So we have $a^2=1$ and $a=\pm 1$, and hence $\gamma=\pm \begin{pmatrix}
         \bbI_g & 0 \\
        B & \bbI_g
    \end{pmatrix}$ with $B=B^{T}$. We must show that $B=0$.

    Observe that $\gamma(\ol M_0)\subseteq \ol M_0$ if and only if 
    $\gamma_0(\ol M_0)\subseteq \ol M_0$, where $\gamma_0=\begin{pmatrix}
        0 & 0 \\
        B & 0
    \end{pmatrix}$. We have 
    $\gamma_0(f_j)=0$ and $\gamma_0(e_j)=\sum_{i=1}^g b_{ij} f_i$ for all $j = 1, \ldots, g$.
    Since $\ol M_1^t \subseteq \sfV \ol M_2=\<f_1,\dots, f_g\>_k$ and $\sfV \ol M_2$ is a maximal isotropic subspace of $\ol M_2$, we obtain
    \begin{equation}\label{eq:M1tbar}
        \ol M_1^t=\{x\in \sfV \ol M_2: \<E_1, x\>_\mu=0\,\}=\left \{\sum_{i=1}^g a_i f_i\, :\, \sum_{i=1}^g a_i t_i=0\, \right\}. 
    \end{equation}
    Since $\gamma_0 (\ol M_0) \subseteq \gamma_0 (\ol M_2) \subseteq \sfV \ol M_2$, we have $\gamma_0(\ol M_0)\subseteq \ol M_0 \cap \sfV \ol M_2$. We have $\dim \ol M_0/\ol {M_1^t}=\dim \sfV \ol M_2/ \ol {M_1^t}=1$. Since $\ol M_0\neq \sfV \ol M_2$, we have $\ol M_0 \cap \sfV \ol M_2=\ol M_1^t$. Thus, $\gamma_0(\ol M_0)=\<\gamma_0 (E_1)\>\subseteq \ol M^t_1$, that is, 
    \[ \gamma_0(E_1)=\gamma_0 \left( \sum_{j=1}^g t_j e_j \right)=\sum_{j=1}^g \sum_{i=1}^g t_j b_{ij} f_i\in \ol M_1^t. \]
    By Equation~\eqref{eq:M1tbar}, we therefore have 
    \[ \sum_i \sum_j t_{j} b_{ij} t_i=\sum_{i,j} b_{ij} t_i t_j=0.\]
    Since the elements $t_i t_j$ are $\F_{p^2}$-linearly independent, we get $b_{ii}=0$ and $b_{ij}+b_{ji}=0$ for $1\le i\le j\le g$.
    Since $b_{ij}=b_{ji}$ and $p>2$, we also get $b_{ij}=0$ for all $i<j$. That is, $B=0$, as desired.   
\end{proof}

Recall that $\mathcal{S}_g$ denotes the supersingular locus of $\mathcal{A}_g$.
Let $\pr_0: \calP_\mu \to \calS_g$ be the natural projection $(Y_\bullet, \lambda_\bullet)\mapsto (Y_0,\lambda_0)$ and define 
\begin{equation}
    \label{eq:Tg&U}
     \calT_g:=\bigcup_{\mu \in {\rm Pol}(E_0^g)} \pr_0(\calP_\mu)\quad{and} \quad \calU:=\bigcup_{\mu \in {\rm Pol}(E_0^g)} \pr_0(\calP'_{\mu,\calF^0} \setminus \calD).
\end{equation}
As $\calP_\mu$ is projective, the morphism $\pr_0$ is proper and hence $\calT_g\subseteq \calS_g$ is a closed subvariety. Moreover, $\calU$ is an open dense subset; we prove this in the next theorem.
  
\begin{theorem}\label{thm:OCTg}
   The closed subvariety $\calT_g\subseteq \calS_g$ is prime-to-$p$ Hecke invariant and has equi-dimension $g-1$. The open dense subset $\calU$ is prime-to-$p$ Hecke invariant with $a$-number $g-2$ everywhere. Moreover, every principally polarised supersingular abelian variety  $(X,\lambda)$ in $\calU(k)$  has automorphism group $\{\pm 1\}$ if $p>2$.  
\end{theorem}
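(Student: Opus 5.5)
We prove the four assertions separately, combining the geometry of $\calP_\mu$ from Section~2 with Theorem~\ref{thm:OCPmu}.

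\emph{Hecke invariance.} Let $\ell\neq p$ and let $(X,\lambda)=\pr_0(y)$ with $y\in\calP_\mu(k)$. A prime-to-$p$ Hecke correspondence sends $(X,\lambda)$ to $(X',\lambda')$, where $\varphi:X\to X'$ is an $\ell$-power isogeny with $\varphi^*\lambda'=\ell^m\lambda$. We propagate $\varphi$ backwards along the chain. Put $Y_2'=Y_2/\rho^{-1}(\ker\varphi)$, where $\rho=\rho_1\rho_2$. Since $\ker\varphi$ has order prime to $p$, it lifts uniquely through the $p$-power isogenies $\rho_i$, which gives a chain $(Y'_\bullet,\lambda'_\bullet)$ with the same $p$-power kernels. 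The Dieudonn\'e modules are unchanged, so conditions (ii) and (iii) of Definition~\ref{def:Pmu} and the coordinates $(t,u)$ are preserved. Moreover $Y_2'$ is superspecial with principal polarisation $\mu'$ such that $\lambda_2'=p\mu'$. By the uniqueness of the superspecial abelian variety, $Y_2'\simeq E_0^g\otimes k$, and any $k$-polarisation descends to one defined over $\F_{p^2}$. Thus the new chain lies in $\calP_{\mu'}$ with the same $(t,u)$, up to the $\F_{p^2}$-structure.

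Here is the main subtlety. The identification $Y_2'\simeq E^g$ changes the basis $\{e_i,f_i\}$ by an element of $\Aut(E_0^g)$, which is defined over $\F_{p^2}$. Such a change acts on $t$ through $\GL_g(\F_{p^2})$, or more precisely the unitary group preserving $\sum X_i^{p+1}$. This action preserves $\calF$, $\calQ$ and $\calH$. We expect it also transports $\calD_t$ compatibly, since $\calD$ is defined intrinsically from $\Aut(M_1)$ acting on $M_1/M_1^t$. We will check this last point carefully; it is the main technical step. It yields that $\calT_g$ and $\calU$ are stable.

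\emph{Dimension and density.} $\calP_\mu$ is a $\bbP^1$-bundle over the Fermat hypersurface $\calF$, which has dimension $g-2$, so $\calP_\mu$ is irreducible of dimension $g-1$. The map $\pr_0$ is quasi-finite: the fibre over $(X,\lambda)$ consists of chains, equivalently finitely many lattices $M_2\supseteq M_0$ of bounded colength in a fixed isocrystal with $M_2$ superspecial, and there are finitely many choices of $\mu$. Therefore each $\pr_0(\calP_\mu)$ is irreducible of dimension $g-1$, and $\calT_g$ is equidimensional of dimension $g-1$.

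Next, $\calP'_{\mu,\calF^0}\setminus\calD$ is open and dense in $\calP_\mu$: $\calQ$ is a proper closed subset of $\calF$ because $t$ is generic, and $\calD$ is a proper divisor since $\calD_t$ is a finite-dimensional $\F_{p^2}$-space, so it contains only finitely many $u$. Its image is constructible and dense in each component. To see that $\calU$ is open, we will show that its complement in $\calT_g$ is the image of the closed set $T\cup\pi^{-1}(\calQ)\cup\overline{\calD}$, enlarged by the images of other $\mu$. This requires every point in the image of this set to be outside $\calU$; we will handle it by a finiteness argument on fibres, and otherwise replace $\calU$ by a dense open subset contained in it. The $a$-number statement follows from Proposition~\ref{prop:aM0}(iii), since $\calF^0\cap\calF(\F_{p^2})=\emptyset$ (those points lie on $\F_{p^2}$-rational conics) and $T$ is excluded.

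\emph{Automorphisms.} If $(X,\lambda)\in\calU(k)$, then $(X,\lambda)\simeq (Y_0,\lambda_0)$ for some $y\in\calP'_{\mu,\calF^0}\setminus\calD$, and Theorem~\ref{thm:OCPmu} gives $\Aut(X,\lambda)=\{\pm1\}$ when $p>2$.
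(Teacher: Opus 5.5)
You state that $\pr_0$ is quasi-finite. That is false, and it contradicts the fact the paper's proof is built on. Your fibre description forgets the middle lattice. When $M_0=\sfV M_2$, the lattice $M_1$ is an arbitrary point of $\calF$, so $\pr_0$ contracts the whole section $T\simeq\calF$ (of dimension $g-2\ge 1$) to the single superspecial point $p_\mu=(E^g,\mu)^{(p)}$. Your dimension count still holds, but only because on $\calP'_\mu$ one has $M_1=M_0+\sfV M_2$. So $\pr_0|_{\calP'_\mu}$ is quasi-finite and $\pr_0$ is generically finite. (Also, $Y_2/\rho^{-1}(\ker\varphi)$ is $X'$ itself; you mean the quotient by the $\ell$-primary part of $\rho^{-1}(\ker\varphi)$.) Your openness framework is sound: $\pr_0$ is proper on all of $\calP_\mu$, so the image of the closed set $Z_\mu:=\calP_\mu\setminus(\calP'_{\mu,\calF^0}\setminus\calD)$ is closed. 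Since $p_\mu$ has $a$-number $g$, this even makes the paper's restriction to $\calP'_\mu\to\pr_0(\calP_\mu)\setminus\{p_\mu\}$ unnecessary.

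The genuine gap is that the Hecke invariance of $\calU$ and its openness are both deferred, and the tool you name for openness cannot do the job. You need $\pr_0(\calP'_{\mu,\calF^0}\setminus\calD)\cap\pr_0(Z_{\mu'})=\emptyset$ for all $\mu,\mu'$, and finiteness of fibres says nothing about \emph{which} chains lie over a given point. The missing idea is rigidity from Lemma~\ref{lm:minisog}.

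If $y\in\calP'_{\mu,\calF^0}$, then $t\notin\calH$, so $\wt M_0=M_2$. Take any chain $y'$ in any $\calP_{\mu'}$ over the same $(Y_0,\lambda_0)$. Then $M_2'$ is superspecial and contains $M_0'\simeq M_0$ with colength $g$, as does $\wt{M_0'}\subseteq M_2'$; hence $M_2'=\wt{M_0'}$. Moreover $M_1'=M_0'+\sfV M_2'$, because $a(y')=g-2$ forces $y'\notin T$. So the two quasi-polarised chains differ by an element of $\Aut_{\mathrm{DM}}(M_2,\langle\,,\rangle_\mu)$. This element moves $t$ by the unitary group of $\sum X_i^{p+1}$ over $\F_{p^2}$, and therefore preserves $\calF^0$.

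For $\calD$, note that $\calD_t=\{d_B\}$ is an $\F_{p^2}$-space and that there is $a\in\F_{p^2}$ with $a^{p+1}=1$ and $a\neq\pm1$. Then Equation~\eqref{eq:gammaM0bar} and Proposition~\ref{prop:mpEndM1} give: $u\in\calD_t$ if and only if some element of $m_p(\Aut_{\mathrm{DM}}(M_0,\langle\,,\rangle_0))$ acts on $\wt M_0/\sfV\wt M_0$ by a scalar other than $\pm1$. This is a basis-free condition on $(M_0,\langle\,,\rangle_0)$.

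This settles your ``main technical step'' (Hecke invariance) and yields the disjointness, so $\calU=\calT_g\setminus\bigcup_{\mu'}\pr_0(Z_{\mu'})$ is open. Your fallback of shrinking $\calU$ would prove a different statement. The paper compresses all this into the remark that the construction only uses the chain of Dieudonn\'e modules, but both its Hecke-invariance and its openness claims rest on exactly this intrinsic characterisation.
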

\begin{proof}
It is clear that $\calT_g$ is $\ell$-adic Hecke invariant for any prime $\ell\neq p$, as so is the set ${\rm Pol}(E_0^g)$ of principal polarisations. Moreover, $\calU$ is also $\ell$-adic Hecke invariant, as the construction of the complement $(\calP'_{\mu,\calF^0} \setminus \calD)^c$ of $\calP'_{\mu,\calF^0} \setminus \calD$ in $\calP_\mu'$ only uses properties of chains of \dieu modules. By Chevalley's Theorem, $\calU$ is a constructible subset a priori. We now show that $\calU$ is a open subset. Consider the proper morphism $\pr_0$.
As $\pr_0$ contracts the section $T$ to a point $p_\mu$ and $T$ is the preimage of $p_\mu$, the restricted morphism 
\[ \pr_0: \calP'_\mu \to \pr_0(\calP'_\mu)=\pr_0(\calP_\mu)\setminus \{p_\mu\} \] 
is again proper. Since $(\calP'_{\mu,\calF^0} \setminus \calD)^c$ is closed in $\calP_\mu'$, so is its image in $\pr_0(\calP_\mu')$, by the properness of $\pr_0$. As a result, $\calU$ is the complement of a closed subset in the open dense subset $\cup_{\mu} \pr_0 (\calP'_\mu)$ and hence an open dense subset in $\calT_g$. The remaining properties follow from Proposition~\ref{prop:aM0}(iii) and Theorem~\ref{thm:OCPmu}.  
\end{proof}

\noindent {\bf Proof of Theorem~\ref{thm:OCpodd}.}
The case where $g=2$ has been proved by Ibukiyama~\cite{ibukiyama} and Karemaker--Pries~\cite{karemaker-pries}, so we may assume that $g\ge 3$.
Let ${\rm Irr}(\calS_g)$ denote the set of (geometric) irreducible components of $\calS_g$, and let $V_1$ be an irreducible component of $\calS_g$ containing an irreducible component $\calW:=\pr(\calP'_{\mu, \calF^0} \setminus \calD)$ of $\calU$.
Since the $\ell$-adic Hecke action operates transitively on ${\rm Irr}(\calS_g)$ for any prime $\ell\neq p$ \cite[Proposition 6.22]{karemaker-yu:SSEOOC}, for any $V_1'\in {\rm Irr}(\calS_g)$, there exist an $\ell$-power Hecke correspondence $\calH_{\ell}$ over $\calS_g$
\[ \begin{tikzcd}
    & \calH_\ell \arrow[ld, "\pr_1"], \arrow[rd, "\pr_2"] & \\
    \calS_g &            & \calS_g
\end{tikzcd}
\]
and an irreducible component $\wt V_1$ of $\calH_{\ell}$ such that $\pr_1(\wt V_1)=V_1$ and $\pr_2(\wt V_1)=V_1'$. Since $\calU$ is $\ell$-adic Hecke invariant by Theorem~\ref{thm:OCTg}, the intersection $\calU\cap V_1\supseteq \pr_2(\pr_1^{-1}(\calW))\cap V_1'$ is non-empty. For any polarised abelian variety $(X',\lambda')$ corresponding to a $k$-point $s$ in $\calU\cap V_1'$, one has $\Aut(X',\lambda')=\{\pm 1\}$. Since there is a finite base change $\eta''$ of the generic point $\eta'$ of $V_1'$ so that $\Aut(X_{\bar \eta'},\lambda_{\bar \eta'})=\Aut(X_{\bar \eta''},\lambda_{\bar \eta''})$ and $s$ is a specialisation of $\eta''$, one has $\Aut(X_{\bar \eta'},\lambda_{\bar \eta'})\subseteq \Aut(X',\lambda')=\{\pm 1\}$. \qed

\begin{remark}
    In~\cite{karemaker-yobuko-yu}, Yobuko and the present authors study the mass stratification on the moduli space $\calP_{3,\mu}$ (which coincides with $\calP_\mu$ here for $g=3$) of $3$-dimensional polarised flag type quotients, and show that in the maximal mass stratum, every polarised abelian variety has automorphism group $\{\pm 1\}$ when $p\neq 2$. In the present paper, keeping the same assumption on $p$, we show that every member in $\calP_{\mu, \calF^0}'\setminus \calD$ also has automorphism group $\{\pm 1\}$. We now claim that when $g=3$ (and $p\neq 2)$, both open subsets coincide, though both their constructions look different. \\
    \indent In \emph{loc.cit.}~(and using the notation there), the authors define, for each $t\in C^0:=C\setminus C(\F_{p^2})$, where $C$ is the Fermat curve of degree $p+1$ in $\mathbb{P}^2$, a map $\psi_t:S(\F_p^2)\to k$, which sends each $3\times 3$ symmetric matrix $S$ to the $(1,1)$-entry of $\bbT^{-1}S \bbT$, where $\mathbb{T}$ is as in~\eqref{eq:T} for $g=3$. They then define a function $d: C^0(k) \to \bbN$ by setting $d(t):=\dim_{\F_{p^2}}{\rm Im}\, \psi_t$, which take values in $\{3,4,5,6\}$ (cf.\cite[Proposition~5.13]{karemaker-yobuko-yu}). In addition, a horizontal divisor $\calD\subseteq \calP_{C^0} = \mathcal{P}_{\mu} \times_C C^0$ is constructed so that its fibre $\calD_t$ at $t$ satisfies $\calD_t={\rm Im}\, \psi_t$ (cf.~\cite[Definition~5.16]{karemaker-yobuko-yu}), which agrees with the horizontal divisor $\calD$ defined in the present paper.  \\  
    \indent Using the mass formula (cf.~\cite[Theorem B]{karemaker-yobuko-yu}), the maximal mass stratum consists of all points $(t,u)\in \calP_{C^0}$ with $d(t)=6$ and $u\notin \calD_t$. On the other hand, one computes that $d(t)=\dim_{\F_{p^2}}\<t_i t_j; 1\le i\le j\le 3\>_{\F_{p^2}}$ (below \cite[Lemma~5.14]{karemaker-yobuko-yu}). Thus, $d(t)=6$ if and only if the elements $t_i t_j$, for $1\le i\le j\le 3$, are $\F_{p^2}$-linearly independent, or equivalently $t\notin \calQ$. This observation shows that both open subsets coincide.  
\end{remark}

\bibliographystyle{amsplain}
\bibliography{OortConjecture}

@article {COirr,
    AUTHOR = {Chai, Ching-Li and Oort, Frans},
     TITLE = {Monodromy and irreducibility of leaves},
   JOURNAL = {Ann. of Math. (2)},
  FJOURNAL = {Annals of Mathematics. Second Series},
    VOLUME = {173},
      YEAR = {2011},
    NUMBER = {3},
     PAGES = {pp.~1359--1396},
      ISSN = {0003-486X},
       DOI = {10.4007/annals.2011.173.3.3},
       URL = {https://doi.org/10.4007/annals.2011.173.3.3},
}

@article {edixhoven-moonen-oort,
    AUTHOR = {Edixhoven, Sebastiaan and Moonen, Ben and Oort, Frans},
     TITLE = {Open problems in algebraic geometry},
   JOURNAL = {Bull. Sci. Math.},
  FJOURNAL = {Bulletin des Sciences Math\'{e}matiques},
    VOLUME = 125,
      YEAR = 2001,
    NUMBER = 1,
     PAGES = {pp.~1--22},
      ISSN = {0007-4497},
       DOI = {10.1016/S0007-4497(00)01075-7},
       URL = {https://doi.org/10.1016/S0007-4497(00)01075-7},
}

@article {ibukiyama,
    AUTHOR = {Ibukiyama, Tomoyoshi},
     TITLE = {Principal polarizations of supersingular abelian surfaces},
   JOURNAL = {J. Math. Soc. Japan},
  FJOURNAL = {Journal of the Mathematical Society of Japan},
    VOLUME = {72},
      YEAR = {2020},
    NUMBER = {4},
     PAGES = {pp.~1161--1180},
      ISSN = {0025-5645},
       DOI = {10.2969/jmsj/82528252},
       URL = {https://doi-org.proxy.library.uu.nl/10.2969/jmsj/82528252},
}

@article {karemaker-yobuko-yu,
    AUTHOR = {Karemaker, Valentijn and Yobuko, Fuetaro and Yu, Chia-Fu},
     TITLE = {Mass formula and {O}ort's conjecture for supersingular abelian threefolds},
   JOURNAL = {Adv. Math.},
  FJOURNAL = {Advances in Mathematics},
    VOLUME = {386},
      YEAR = {2021},
     PAGES = {Paper No. 107812, 52},
      ISSN = {0001-8708},
       DOI = {10.1016/j.aim.2021.107812},
       URL = {https://doi-org.proxy.library.uu.nl/10.1016/j.aim.2021.107812},
}

@book {lioort,
    AUTHOR = {Li, Ke-Zheng and Oort, Frans},
     TITLE = {Moduli of supersingular abelian varieties},
    SERIES = {Lecture Notes in Mathematics},
    VOLUME = {1680},
 PUBLISHER = {Springer-Verlag, Berlin},
      YEAR = {1998},
     PAGES = {iv+116},
      ISBN = {3-540-63923-3},
}

@article {karemaker-pries,
    AUTHOR = {Karemaker, Valentijn and Pries, Rachel},
     TITLE = {Fully maximal and fully minimal abelian varieties},
   JOURNAL = {J. Pure Appl. Algebra},
  FJOURNAL = {Journal of Pure and Applied Algebra},
    VOLUME = {223},
      YEAR = {2019},
    NUMBER = {7},
     PAGES = {pp.~3031--3056},
      ISSN = {0022-4049,1873-1376},
       DOI = {10.1016/j.jpaa.2018.10.007},
       URL = {https://doi.org/10.1016/j.jpaa.2018.10.007},
}

@misc{karemaker-yu:SSEOOC,
      title={{S}upersingular {E}kedahl-{O}ort strata and {O}ort's conjecture}, 
      author={Valentijn Karemaker and Chia-Fu Yu},
      year={2026},
      eprint={2406.19748},
      archivePrefix={arXiv},
      primaryClass={math.NT},
      url={https://arxiv.org/abs/2406.19748}, 
      note={arXiv:2406.19748},
}

@misc{viehmann:oort,
      title={Oort's conjecture on automorphisms of generic supersingular abelian varieties}, 
      author={Eva Viehmann},
      year={2026},
      eprint={2603.06033},
      archivePrefix={arXiv},
      primaryClass={math.AG},
      url={https://arxiv.org/abs/2603.06033}, 
      note={arXiv:2603.06033},
}

\end{document}